  \pgfplotsset{compat=newest}
\newlength\figureheight
  \newlength\figurewidth
\pgfplotsset{%
    tick label style={font=\scriptsize},
    label style={font=\footnotesize},
    legend style={font=\footnotesize},
         every axis plot/.append style={very thick}
}
\newcommand{\ee}{\mathbb{E}}
\newcommand{\pp}{\mathbb{P}}
\newcommand\eq[1]{(\ref{eq:#1})}
\newcommand\cadlag{c\`adl\`ag\ }
\newtheorem{theorem}{Theorem}
\newtheorem{lemma}{Lemma}
\newtheorem{corollary}{Corollary}
\newtheorem{remark}{Remark}
\newtheorem{condition}{Condition}
\newcommand{\blue}[1]{{\color{blue}#1}}
\begin{document}

\title{From reflected L\'evy processes to stochastically monotone Markov processes via generalized inverses and supermodularity}

\author{Offer Kella\thanks{Department of Statistics and Data Science, the Hebrew University of Jerusalem, Jerusalem 9190501, Israel ({\tt\scriptsize  offer.kella@huji.ac.il}).}
\thanks{Supported in part by grant No. 1647/17 from the Israel Science Foundation and the Vigevani Chair in Statistics.}
\ \ and\ \  Michel Mandjes\thanks{Korteweg-de Vries Institute for Mathematics, University of Amsterdam, Science Park 904, 1098 XH Amsterdam, The Netherlands ({\tt\scriptsize  m.r.h.mandjes@uva.nl}).}
\thanks{Partly funded by the NWO Gravitation Programme N{\sc etworks} (Grant Number 024.002.003) and an NWO Top Grant (Grant Number 613.001.352).}}
	
\maketitle

\begin{abstract} \noindent
It was recently proven that the correlation function of the stationary version of a reflected L\'evy process  is nonnegative, nonincreasing and convex. In another branch of the literature it was established that the mean value of the reflected process starting from zero is nonnegative, nondecreasing and concave. In the present paper it is shown, by putting them in a common framework, that these results extend to  substantially more general settings. 
Indeed, instead of reflected L\'evy processes, we consider a class of more general stochastically monotone Markov processes. In this setup we  show monotonicity results associated with a supermodular function of two coordinates of our Markov process,  from which the above-mentioned monotonicity and convexity/concavity  results directly follow, but now for the class of Markov processes considered rather than just reflected L\'evy processes. In addition, various results for the transient case (when the Markov process is not in stationarity) are provided. 
The conditions imposed are natural, in  that they are satisfied by various frequently used Markovian models as illustrated by a series of examples.

\vspace{2mm}
 
 \noindent {\it Keywords:} stochastically monotone Markov processes, supermodular function, stochastic storage process, L\'evy-driven queues, Skorokhod problem, monotone and convex autocorrelation, concave mean.
 
 \vspace{2mm}
 
 \noindent 
{\it AMS Subject Classification (MSC2010):} 60J99, 60G51, 90B05.

\end{abstract}
 
 \section{Introduction}
 
In the context of {L\'evy-driven queues} \cite{DM} and  L\'evy storage processes \cite{KY},
it was recently shown \cite{B19} that, whenever the stationary distribution exists and has a finite second moment,  the correlation function associated with the stationary version of the reflected process is nonnegative, nonincreasing and convex. Here, a L\'evy-driven queue is to be interpreted as the one-sided (Skorokhod) reflection map applied to a L\'evy process. Notably, the results in \cite{B19} show that the mentioned structural properties carry over to the finite-buffer L\'evy-driven queue, {\em i.e.,} the two-sided (Skorokhod) reflection map. One could regard \cite{B19} as the endpoint of a long-lasting research effort which began over four decades ago. The nonnegativity, nonincreasingness and convexity of the correlation function of the stationary process was proven in \cite{OTT} for the case where the L\'evy process under consideration is compound Poisson. The more recent contributions
\cite{EM} and \cite{GM}  deal with the spectrally-positive and negative cases, respectively. Finally, \cite{B19} removed the spectral restrictions on the L\'evy process assumed in \cite{EM,GM}. Whereas \cite{EM,GM,OTT} rely on the machinery of completely-monotone functions, \cite{B19} uses a direct conditioning argument in combination of elementary properties of the reflection map.

A second strand of research that we would like to mention concerns structural properties of the mean value (and related quantities) of the reflected process. It was found \cite{KS94} that for a one-sided Skorohod reflection, when the driving process has stationary increments and starts from zero, the mean  of the reflected process (as a function of time) is nonnegative, nondecreasing and concave. In particular this holds when the driving process also has independent increments ({\em i.e.,} the L\'evy case), which for the spectrally-positive case had been discovered earlier  \cite{K92}, where we refer to \cite[Thm.\ 11]{KW96}
for a multivariate analogue. The nonnegativity, nondecreasingness and concavity of the mean was proven to extend to the two-sided reflection case in \cite{AM}, where it was also shown that for the one- and two-sided reflection cases the variance is nondecreasing. 
 
The main objective of this paper is to explore to what level of generality the results from the above two branches of the literature can be extended, and whether they could be somehow brought under a common umbrella. 
Importantly, in our attempt to understand the above-mentioned structural properties better, we discovered that they are covered by  a substantially broader framework. We have done so by considering stochastically monotone Markov processes (in both discrete and continuous time), and deriving properties of the expected value of bivariate supermodular functions of coordinates of the process. 

Importantly, we discovered a neat and quite simple approach to extend a broad range of existing results to a substantially broader class of processes and more general functional setups. {We strongly feel that this particular approach gets to the heart of the matter, and also helps in giving a much clearer understanding of earlier results.} More specifically, our findings directly 
imply the type of monotonicity results of the covariance that were found in \cite{B19,EM,GM} and \cite{K92,KS94}. For the convexity results of \cite{B19,EM,GM} and the concavity results of \cite{KS94} (restricted to L\'evy processes) and \cite{K92} a further, rather natural, condition needs to be imposed on the underlying Markov transition kernel~-- this is Condition~\ref{cond:1} to follow. However, notably, the monotonicity of the variance established in \cite{AM} is {\it not}  valid under the conditions imposed in the current paper, as we will show by means of a counterexample. 
 
The area of stochastically monotone Markov processes is vast. Without aiming at giving a full overview, we would like to mention \cite{Daley,KK77,rusch,siegmund}. In particular, in \cite{Daley} a main result is Theorem~4, stating that if $\{X_n\,|\,n\ge 0\}$ is a  stationary stochastically monotone time-homogeneous Markov chain (on a real valued state space) and $f$ is nondecreasing, then $\text{Cov}(f(X_0),f(X_n))$ (whenever it exists and is finite) is nonnegative and nonincreasing in $n$. As it turns out, this result as well is a special case the results established in our current paper. Importantly, quite a few frequently used stochastic processes are stochastically monotone Markov processes, covering for example birth-death processes and diffusions  \cite{KK77}, as well as certain L\'evy dams, (state dependent) random walks, besides the above-mentioned reflected processes.

In our proofs we use the notion of a generalized inverse of a distribution function and some of its properties, conditioning arguments and the application of the concept of supermodularity and its relationship to comonotonicity.
More concretely, it will be important to study the properties of $h(X_s,X_t)$ or $h(X_s,X_t-X_{t+\delta})$ (and others) for $0\le s\le t$ and $\delta>0$, where $h$ is a supermodular function. This will be done for both the stationary case and the transient case (under various conditions). For background on results associated with supermodular functions, and in particular the relationship with comonotone random variables, which we will need several times, we refer to \cite{CSW76,PS10,pw15}.

As so often in mathematics, once being in the right framework proofs can be highly compact and seemingly straightforward. It is, however, typically far from trivial to 
identify this best `lens' through which one should look at the problem. This phenomenon also applies in the context of the properties derived in the present paper. Indeed, in previous works the focus has been on specific models and specific properties, with proofs that tend to be ad-hoc, lengthy and involved, reflecting the lack of an overarching framework. 
With the general approach that we develop in this paper, we manage to bring a wide class of existing results under a common denominator, with the underlying proofs becoming clean and insightful. In addition, because we have found the right angle to study this class of problems, we succeed in shedding light on the question to what extent these results can be further generalized. Our objective was to present the framework as cleanly as possible; our paper is self-contained in the sense that it does not require any previous knowledge of stochastically monotone Markov processes.

The paper is organized as follows. 
In Section~\ref{main} the formal setup (including Condition~\ref{cond:1}), the main results and their proofs will be given. In Section~\ref{Motivation} we provide a series of appealing examples of frequently used stochastically monotone Markov processes that satisfy Condition~\ref{cond:1}. Section \ref{CONCL} concludes.

Throughout we write $a\wedge b=\min(a,b)$, $a\vee b=\max(a,b)$, $a^+=a\vee 0$, $a^-=-a\wedge 0=(-a)^+$. In addition, {\em a.s.} abbreviates {\em almost surely} ({\em i.e.,}  {\em with probability one}), and {\em cdf} abbreviates {\em cumulative distribution function}.
Also, $=_{\rm d}$ abbreviates `distributed' or `distributed like' and $X\le_{\text{st}}Y$ means ${\mathbb P}(X>t)\le {\mathbb P}(Y>t)$ for all $t$ (stochastic order).

\section{General theory}\label{main}
This section presents our main results. Section \ref{MR} treats our general theory, whereas Section \ref{SUP} further reflects on general questions that can be dealt with relying on  our results, as well as the connection with supermodular functions. 

\subsection{Main results}\label{MR}
For $x\in\mathbb{R}$ and $A$ Borel (one-dimensional), we let $p(x,A)$ be a Markov transition kernel. By this we mean that for every Borel $A$, $p(\cdot,A)$ is a Borel function and for each $x\in\mathbb{R}$, $p(x,\cdot)$ is a probability measure. We will say that $p$ is {\em stochastically monotone} if $p(x,(y,\infty))$ is nondecreasing in $x$ for each $y\in\mathbb{R}$, which, as discussed earlier and will be demonstrated later, is a natural property across a broad range of frequently used stochastic models.

The following condition plays a crucial role in our results. Whenever it is satisfied, it allows us to establish highly general results. The condition is natural in the context of, {\em e.g.,}  queues and other storage systems, as will be pointed out in Section \ref{Motivation}.

\begin{condition}
\label{cond:1}$p(x,(x+y,\infty))$ is nonincreasing in $x$ for each $y\in\mathbb{R}$. 
\end{condition}

Now, for $n\ge 1$, let $p$ and $p_n$, for $n\ge 1$, be transition kernels. Define
\begin{equation}
G(x,u)=\inf\{y\,|\, p(x,(-\infty,y])\ge u\}
\end{equation}
the {\it generalized-inverse function} associated with the cdf $F_x(y)=p(x,(-\infty,y])$, and let similarly $G_n(x,u)$ be the generalized-inverse function associated with $p_n$. We recall ({\em e.g.,} \cite{eh13}, among many others) that $G(x,u)$ is nondecreasing and left continuous in $u$ (on $(0,1)$), and that $G(x,u)\le y$ if and only if $u\le p(x,(-\infty,y])$. Thus, if $U=_{\rm d}\text{U}(0,1)$, where $\text{U}(0,1)$ is the uniform distribution on $(0,1)$, then ${\mathbb P}(G(x,U)\le y)=p(x,(-\infty,y])$ and thus ${\mathbb P}(G(x,U)\in A)=p(x,A)$. A similar reasoning applies to $G_n(x,u)$ for every $n\ge 1$.

\begin{lemma}\label{lem:g}
$p$ is stochastically monotone if and only if, for each $u\in(0,1)$, $G(x,u)$ is nondecreasing in $x$. Furthermore, Condition~\ref{cond:1} is satisfied if and only if, for each $u$, $G(x,u)-x$ is nonincreasing in $x$. 
\end{lemma}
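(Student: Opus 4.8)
The entire lemma rests on the characterization stated just before it: $G(x,u)\le y$ if and only if $u\le p(x,(-\infty,y])$. Equivalently, writing the complementary event, $G(x,u)>y$ if and only if $u>p(x,(-\infty,y])$, i.e. $u> 1-p(x,(y,\infty))$. So the plan is to translate both stochastic monotonicity and Condition~\ref{cond:1} into statements about the generalized inverse by exploiting this equivalence, and then note that a monotonicity-in-$x$ statement holding for all thresholds is the same as the corresponding pointwise monotonicity of $G$.

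**Proof plan.**

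\begin{proof}[Proof sketch]
The plan is to treat the two assertions in parallel, each via the fundamental equivalence $\{G(x,u)\le y\}\Leftrightarrow\{u\le p(x,(-\infty,y])\}$ recalled above.

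For the first assertion, I would argue as follows. Suppose $p$ is stochastically monotone, fix $u\in(0,1)$, and take $x_1\le x_2$. I want $G(x_1,u)\le G(x_2,u)$. Set $y=G(x_2,u)$; by the equivalence, $u\le p(x_2,(-\infty,y])$, i.e. $1-u\ge p(x_2,(y,\infty))$. Stochastic monotonicity gives $p(x_1,(y,\infty))\le p(x_2,(y,\infty))\le 1-u$, hence $u\le p(x_1,(-\infty,y])$, which by the equivalence again yields $G(x_1,u)\le y=G(x_2,u)$. Conversely, suppose $G(\cdot,u)$ is nondecreasing for every $u$; fix $y$ and take $x_1\le x_2$. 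I want $p(x_1,(y,\infty))\le p(x_2,(y,\infty))$. Since $U=_{\rm d}\mathrm{U}(0,1)$ gives $p(x,(y,\infty))={\mathbb P}(G(x,U)>y)$, the pointwise inequality $G(x_1,U)\le G(x_2,U)$ implies $\{G(x_1,U)>y\}\subseteq\{G(x_2,U)>y\}$ almost surely, and taking probabilities gives the claim. Thus I use the probabilistic representation in one direction and the direct equivalence in the other.

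For the second assertion I would run the same machinery but shift the argument $y\mapsto x+y$. The target is that $G(x,u)-x$ is nonincreasing in $x$ for each $u$, i.e. for $x_1\le x_2$ one has $G(x_1,u)-x_1\ge G(x_2,u)-x_2$. Note that $p(x,(x+y,\infty))={\mathbb P}(G(x,U)>x+y)={\mathbb P}(G(x,U)-x>y)$, so Condition~\ref{cond:1} is precisely the statement that the random variable $G(x,U)-x$ is stochastically nonincreasing in $x$ at the fixed level $U$; more carefully, for the forward direction I set $y=G(x_1,u)-x_1$ so that $u\le p(x_1,(-\infty,x_1+y])=1-p(x_1,(x_1+y,\infty))$, apply Condition~\ref{cond:1} to get $p(x_2,(x_2+y,\infty))\le p(x_1,(x_1+y,\infty))\le 1-u$, and read off $G(x_2,u)\le x_2+y=x_2+G(x_1,u)-x_1$, which rearranges to the desired inequality. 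The converse is again the probabilistic representation: $G(x_1,U)-x_1\ge G(x_2,U)-x_2$ pointwise gives $\{G(x_2,U)-x_2>y\}\subseteq\{G(x_1,U)-x_1>y\}$, and taking probabilities recovers Condition~\ref{cond:1}.

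The only genuinely delicate point, and the step I would be most careful about, is the handling of the threshold values and the direction of the (non)strict inequalities when passing through the generalized inverse, since $G$ is only left-continuous and ties in the cdf can occur. The equivalence $G(x,u)\le y\Leftrightarrow u\le p(x,(-\infty,y])$ is exactly designed to sidestep these subtleties, so the right discipline is to reduce every statement to this form and never manipulate $G$ directly; once that is done, each direction is a two-line chain of inequalities. No continuity or regularity of $p$ beyond the stated kernel properties is needed.
\end{proof}
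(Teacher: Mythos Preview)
Your proof is correct and follows essentially the same approach as the paper: both hinge on the equivalence $G(x,u)\le y\Leftrightarrow u\le p(x,(-\infty,y])$ (the paper phrases this as a containment of the level sets $\{y:p(x,(-\infty,y])\ge u\}$ and takes infima, which is the same thing). Your treatment is in fact more explicit about the converse directions, which the paper leaves implicit; using the probabilistic representation $p(x,(y,\infty))={\mathbb P}(G(x,U)>y)$ for that step is clean and entirely in the spirit of the paper.
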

\begin{proof}
Follows from the facts~(i) that for $x_1<x_2$
\begin{equation}
\{y\,|\,p(x_1,(-\infty,y])\ge u\}\supset \{y\,|\,p(x_2,(-\infty,y])\ge u\}\,,
\end{equation}
(ii)~that under Condition~\ref{cond:1} for $x_1<x_2$
\begin{equation}
\{y\,|\,p(x_1,(-\infty,x_1+y])\ge u\}\subset \{y\,|\,p(x_2,(-\infty,x_2+y])\ge u\},
\end{equation}
and (iii) that $G(x,u)-x=\inf\{y\,|\,p(x,(-\infty,x+y]\ge u\}$.
\end{proof}

Now, denote $g_k^{k}(x,u)=G_k(x,u)$ and, for $n\ge k+1$,
\begin{equation}
g_k^n(x,u_1,\ldots,u_{n-k+1})=G_n(g_k^{n-1}(x,u_1,\ldots,u_{n-k}),u_{n-k+1})\ .
\end{equation}
It immediately follows by induction that in case $p_k,\ldots,p_n$ are stochastically monotone, it holds that $g_k^n(x,u_1,\ldots,u_{n-k+1})$ is nondecreasing in $x$.
Assuming that $U_1,U_2,\ldots$ are i.i.d.\ and distributed $\text{U}(0,1)$, then with $X'_0=x$ and 
\begin{equation}
X'_n=g_1^n(x,U_1,\ldots,U_n)
\end{equation} 
for $n\ge 1$, $\{X'_n\,|\, n\ge 0\}$ is a real valued (possibly time-inhomogenous) Markov chain with possibly time-dependent transition kernels $p_1,p_2,\ldots$.
Let us now denote $p^{k-1}_k(x,A)=1_A(x)$ and, for $n\ge k$,
\begin{equation}
p^n_k(x,A)=\int_{\mathbb{R}}p_n(y,A)p_k^{n-1}(x,\text{d}y)\,.
\end{equation}

\begin{lemma}\label{lem:n-step}
If, for $1\le k\le n$, $p_k,\ldots,p_n$ are stochastically monotone Markov kernels (resp., and in addition satisfy Condition~\ref{cond:1}), then $p_k^n$ is stochastically monotone (resp., and in addition satisfies Condition~\ref{cond:1}). 
\end{lemma}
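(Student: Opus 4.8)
The plan is to avoid manipulating the composed kernels $p_k^n$ directly and instead exploit the coupling furnished by the generalized-inverse maps $g_k^n$. The first thing I would record is the distributional identity that $g_k^n(x,U_1,\ldots,U_{n-k+1})$ has law $p_k^n(x,\cdot)$ whenever $U_1,\ldots,U_{n-k+1}$ are i.i.d.\ $\mathrm{U}(0,1)$. This is proved by induction on $n$: the base case $n=k$ is exactly $\mathbb{P}(G_k(x,U)\in A)=p_k(x,A)$, and in the step one conditions on $W:=g_k^{n-1}(x,U_1,\ldots,U_{n-k})$, which by hypothesis has law $p_k^{n-1}(x,\cdot)$ and is independent of the fresh variable $U_{n-k+1}$, so that $g_k^n(x,U_1,\ldots,U_{n-k+1})=G_n(W,U_{n-k+1})$ has law $\int_{\mathbb{R}} p_n(w,\cdot)\,p_k^{n-1}(x,\mathrm dw)=p_k^n(x,\cdot)$.

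For the stochastic-monotonicity assertion I would use that $g_k^n(x,u_1,\ldots,u_{n-k+1})$ is nondecreasing in $x$, which the text already notes follows by induction from the monotonicity half of Lemma~\ref{lem:g}. Feeding the \emph{same} uniforms into two starting points $x_1<x_2$ produces a coupling with $g_k^n(x_1,U_1,\ldots,U_{n-k+1})\le g_k^n(x_2,U_1,\ldots,U_{n-k+1})$ almost surely, whence $p_k^n(x_1,\cdot)\le_{\text{st}}p_k^n(x_2,\cdot)$, which is precisely stochastic monotonicity of $p_k^n$.

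For Condition~\ref{cond:1} the new ingredient is the claim that, under Condition~\ref{cond:1} for each of $p_k,\ldots,p_n$, the map $x\mapsto g_k^n(x,u_1,\ldots,u_{n-k+1})-x$ is nonincreasing for every fixed $(u_1,\ldots,u_{n-k+1})$. I would prove this by induction \emph{jointly} with the monotonicity of $g_k^n$, since both are needed in the step. The base case is Lemma~\ref{lem:g} applied to $p_k$. For the step, writing $w=g_k^{n-1}(x,u_1,\ldots,u_{n-k})$, I split
\[
g_k^n(x,u_1,\ldots,u_{n-k+1})-x=\bigl(G_n(w,u_{n-k+1})-w\bigr)+\bigl(w-x\bigr).
\]
The second bracket is nonincreasing in $x$ by the inductive hypothesis. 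For the first bracket, Lemma~\ref{lem:g} (Condition~\ref{cond:1} for $p_n$) gives that $z\mapsto G_n(z,u_{n-k+1})-z$ is nonincreasing, while $w$ is nondecreasing in $x$ by the monotonicity half of the induction; composing a nonincreasing outer map with a nondecreasing inner one yields a nonincreasing function of $x$, so the sum is nonincreasing. Coupling once more on the same uniforms then gives, for $x_1<x_2$,
\[
p_k^n(x_1,(x_1+y,\infty))=\mathbb{P}\bigl(g_k^n(x_1,U_1,\ldots,U_{n-k+1})-x_1>y\bigr)\ge\mathbb{P}\bigl(g_k^n(x_2,U_1,\ldots,U_{n-k+1})-x_2>y\bigr)=p_k^n(x_2,(x_2+y,\infty)),
\]
which is exactly Condition~\ref{cond:1} for $p_k^n$.

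The step I expect to be the main obstacle is the Condition~\ref{cond:1} induction, precisely because the two desired properties are entangled: controlling $G_n(w,u_{n-k+1})-w$ as a function of $x$ requires simultaneously knowing that the inner map $w=g_k^{n-1}(x,\cdot)$ is nondecreasing in $x$, so that the composition points the right way, while the decomposition only closes because the inductive hypothesis supplies the "$-x$" nonincreasingness at level $n-1$. Carrying both statements through the induction together, rather than attempting to deduce one from the other after the fact, is what makes the argument go through cleanly.
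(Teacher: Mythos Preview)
Your proof is correct and follows essentially the same approach as the paper: both use the generalized-inverse coupling, and the key decomposition $g_k^n(x,\cdot)-x=(G_n(w,\cdot)-w)+(w-x)$ with $w=g_k^{n-1}(x,\cdot)$ is exactly the paper's telescoping argument. The only cosmetic difference is that the paper reduces by induction to the single-step case $n=k+1$ and argues there, whereas you carry the induction on $n$ directly; your explicit remark that monotonicity and Condition~\ref{cond:1} must be propagated jointly is a nice clarification of what the paper leaves implicit.
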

\begin{proof}
By induction, it suffices to show this for the case $n=k+1$. If $p_k$ and $p_{k+1}$ are stochastically monotone, then $g_k^{k+1}(x,U_1,U_2)$ is a random variable having the distribution $p_k^{k+1}(x,\cdot)$. Therefore, the stochastic monotonicity of $p_k^{k+1}$ is a consequence of the fact that $g_k^{k+1}(x,U_1,U_2)$ is nondecreasing in $x$. Now, if stochastic monotonicity and Condition~\ref{cond:1} hold then $G_k(x,U_1)-x$ and $G_{k+1}(G_k(x,U_1),U_2)-G_k(x,U_1)$ are nonincreasing in $x$ and thus so is their sum. This implies that $g_k^{k+1}(x,U_1,U_2)-x$ is nonincreasing in $x$, which implies that $p_k^{k+1}$ satisfies Condition~\ref{cond:1}.
\end{proof}
A (possibly time-inhomogeneous) Markov chain with stochastically monotone transition kernels will be called a {\em stochastically monotone Markov chain} (see, {\it e.g.}, \cite{Daley} for the time-homogenous case). 
Lemma~\ref{lem:n-step} immediately implies the following.

\begin{corollary}\label{cor:sub}
Any subsequence of a stochastically monotone Markov chain (resp.,  in addition satisfying Condition~\ref{cond:1}) is also a stochastically monotone Markov chain (resp., in addition satisfying Condition~\ref{cond:1}).
\end{corollary}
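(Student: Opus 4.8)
The plan is to reduce the statement to Lemma~\ref{lem:n-step} by recognizing that the one-step transition kernels of a subsampled chain are precisely the composed kernels $p_k^n$ to which that lemma applies. First I would fix a subsequence of indices $n_0<n_1<n_2<\cdots$ and consider the process $\{X_{n_j}\,|\,j\ge 0\}$. The initial step is to argue that subsampling preserves the Markov property: since $\{X_n\,|\,n\ge 0\}$ is Markov, conditioning on $X_{n_0},\ldots,X_{n_{j-1}}$ the conditional law of $X_{n_j}$ depends only on $X_{n_{j-1}}$, so $\{X_{n_j}\,|\,j\ge 0\}$ is again a (possibly time-inhomogeneous) Markov chain. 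This is standard, but I would spell out that the one-step transition kernel carrying $X_{n_{j-1}}$ to $X_{n_j}$ is obtained by integrating the intermediate kernels $p_{n_{j-1}+1},\ldots,p_{n_j}$ along the path; in the notation established before the lemma this is exactly $p^{n_j}_{n_{j-1}+1}$.

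Next I would invoke Lemma~\ref{lem:n-step} with $k=n_{j-1}+1$ and $n=n_j$. Since each of $p_{n_{j-1}+1},\ldots,p_{n_j}$ is a stochastically monotone kernel of the original chain, the lemma yields that $p^{n_j}_{n_{j-1}+1}$ is stochastically monotone; and if in addition each of these kernels satisfies Condition~\ref{cond:1}, the lemma yields that $p^{n_j}_{n_{j-1}+1}$ satisfies Condition~\ref{cond:1} as well. As this holds for every $j\ge 1$, all transition kernels of the subsampled chain are stochastically monotone (resp.\ and satisfy Condition~\ref{cond:1}), which is precisely the assertion of the corollary.

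The argument is essentially bookkeeping, so the only point requiring a little care~-- the main obstacle, such as it is~-- is the index matching: one must confirm that the composed kernel carrying $X_{n_{j-1}}$ to $X_{n_j}$ really is $p^{n_j}_{n_{j-1}+1}$ and not an off-by-one variant. This is where the conventions $p^{k-1}_k(x,A)=1_A(x)$ and $p^k_k=p_k$ recorded before Lemma~\ref{lem:n-step} become useful, since they fix the endpoints of the composition unambiguously. Once the indexing is pinned down, the result follows immediately from Lemma~\ref{lem:n-step}, matching the paper's remark that the corollary is an immediate consequence.
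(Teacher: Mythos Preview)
Your proposal is correct and follows exactly the approach the paper indicates: the paper simply states that the corollary is an immediate consequence of Lemma~\ref{lem:n-step}, and you have spelled out precisely how that implication works, including the identification of the subsampled transition kernels as $p^{n_j}_{n_{j-1}+1}$ and the index bookkeeping needed to invoke the lemma.
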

Therefore, a subsequence of a time-homogeneous stochastically monotone Markov chain (resp., in addition satisfying Condition~\ref{cond:1}) may no longer be time-homogeneous, but is always a stochastically monotone Markov chain (resp., in addition satisfying Condition~\ref{cond:1}).

Recall that $h:\mathbb{R}^2\to\mathbb{R}$ is called {\em supermodular} if whenever $x_1\le x_2$ and $y_1\le y_2$ we have that
\begin{equation}
h(x_1,y_2)+h(x_2,y_1)\le h(x_1,y_1)+h(x_2,y_2)\ .
\end{equation}
If $X$ and $Y$ have cdf\,s $F_X$ and $F_Y$, then $(X,Y)$ will be called {\em comonotone} if $\pp(X\le x,Y\le y)=\pp(X\le x)\wedge \pp(Y\le y)$ for all $x,y$. There are various equivalent definitions for comonotonicity. In particular it is worth mentioning that when $X$ and $Y$ are identically distributed, then they are comonotone if and only if $\pp(X=Y)=1$. It is well known that if  $(X',Y')$ is comonotone and has the same marginals as $(X,Y)$, then for any Borel supermodular $h$ for which $\ee h(X,Y)$ and $\ee h(X',Y')$ exist and are finite, we have that $\ee h(X,Y)\le \ee h(X',Y')$. In particular, when $X,Y$ are identically distributed then $\ee h(X,Y)\le \ee h(Y,Y)$, which is a property that we will need later in this paper. For such results and much more see, {\em e.g.}, \cite{pw15} and references therein, where the Borel assumption was missing, but is actually needed as there are non-Borel supermodular functions for which $h(X,Y)$ is not necessarily a random variable. We write down what we will need later as a lemma. Everything in this lemma is well known.

\begin{lemma}\label{lem:supermod}
Let $(X,Y)$ be a random pair such that $X=_{\rm d} Y$. Then for every Borel supermodular function $h:\mathbb{R}^2\to\mathbb{R}$ for which $\ee h(X,Y)$ and $\ee h(Y,Y)$ exist and are finite, we have that
\begin{equation}
\ee h(X,Y)\le \ee h(Y,Y)\ .
\end{equation}  
Moreover, if $h$ is supermodular and $f_1,f_2$ are nondecreasing, then $h(f_1(x),f_2(y))$ is supermodular and in particular, since $h(x,y)=xy$ is supermodular, $f_1(x)f_2(y)$ is supermodular as well.
\end{lemma}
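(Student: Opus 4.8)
The plan is to treat the two assertions separately: the inequality follows by specializing the comonotone upper bound recalled just above the statement, while the structural claim about $h(f_1(x),f_2(y))$ is a direct verification from the definition of supermodularity.

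For the inequality $\ee h(X,Y)\le\ee h(Y,Y)$, the idea is to produce a comonotone pair with the same marginals as $(X,Y)$ and then invoke the comonotone bound. First I would note that, since $X=_{\rm d}Y$, both coordinates of $(X,Y)$ share the common cdf $F$, so the pair $(Y,Y)$ has exactly the marginals of $(X,Y)$. Because its two coordinates are identically distributed and coincide a.s., $(Y,Y)$ is comonotone (as recalled in the text, identically distributed variables are comonotone if and only if they agree a.s.). Applying the comonotone upper bound with $(X',Y')=(Y,Y)$, whose expectation $\ee h(Y,Y)$ is assumed finite, yields $\ee h(X,Y)\le\ee h(Y,Y)$ immediately. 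Equivalently, one may realize this comonotone coupling through the generalized inverse $F^{-1}$ of $F$, namely as $(F^{-1}(U),F^{-1}(U))$ with $U=_{\rm d}\text{U}(0,1)$; this coupling is a.s. diagonal and distributed like $(Y,Y)$, which makes the role of comonotonicity transparent.

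For the second assertion, fix $x_1\le x_2$ and $y_1\le y_2$ and set $a_i=f_1(x_i)$ and $b_j=f_2(y_j)$. Monotonicity of $f_1,f_2$ gives $a_1\le a_2$ and $b_1\le b_2$, so supermodularity of $h$ applied to these ordered arguments gives
\begin{equation}
h(a_1,b_2)+h(a_2,b_1)\le h(a_1,b_1)+h(a_2,b_2),
\end{equation}
which, rewritten through $f_1,f_2$, is exactly the supermodularity inequality for $(x,y)\mapsto h(f_1(x),f_2(y))$. For the product case I would first check that $h(x,y)=xy$ is supermodular, which reduces to the elementary identity making $x_1y_1+x_2y_2-x_1y_2-x_2y_1=(x_2-x_1)(y_2-y_1)\ge 0$; applying the transformation result just proved then shows that $f_1(x)f_2(y)=h(f_1(x),f_2(y))$ is supermodular.

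The only substantive ingredient is the comonotone upper bound used in the first step, which here is imported as a known fact with references, so the proof reduces to bookkeeping. If one wished to make the argument fully self-contained, the main obstacle would be establishing that comonotone couplings maximize $\ee h$ among all couplings with prescribed marginals. The cleanest route is a layer-cake representation of $h$ in terms of its mixed second difference combined with the Fr\'echet--Hoeffding pointwise bound $\pp(X>s,Y>t)\le\pp(X>s)\wedge\pp(Y>t)$, which is saturated precisely by the comonotone coupling; everything else is routine.
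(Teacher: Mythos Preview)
Your proposal is correct and mirrors exactly what the paper does: the paper states this lemma without proof, declaring that ``everything in this lemma is well known,'' and the paragraph immediately preceding the lemma already spells out the first assertion as a specialization of the comonotone upper bound (with $(X',Y')=(Y,Y)$), which is precisely your argument. Your direct verification of the supermodularity of $h(f_1(x),f_2(y))$ and of $xy$ is the standard one-line check the paper implicitly assumes.
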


As usual, we call $\pi$ {\em invariant} for a Markov kernel $p$ if, for every Borel $A$,  $\int_{\mathbb{R}}p(x,A)\pi(\text{d}x)=\pi(A)$.
We proceed by stating and proving our first main result.

\begin{theorem}\label{th:stat012}
Assume that $X_0,X_1,X_2$ is a stochastically monotone Markov chain where $p_1$ has an invariant distribution $\pi_1$ and $X_0$ is $\pi_1$ distributed. Then for every Borel supermodular $h:\mathbb{R}^2\to\mathbb{R}$,
\begin{equation}\label{eq:stat012}
\ee h(X_0,X_2)\le \ee h(X_1,X_2)
\end{equation}
whenever the means exist and are finite. In particular, for any nondecreasing $f_1,f_2$ for which the means of $f_1(X_0)$, $f_2(X_2)$, $f_1(X_0)f_2(X_2)$ and $f_1(X_1)f_2(X_2)$ exist and are finite, we have that
\begin{equation}\label{eq:cov012}
0\le\text{\rm Cov}(f_1(X_0),f_2(X_2))\le \text{\rm Cov}(f_1(X_1),f_2(X_2))\ . 
\end{equation}
\end{theorem}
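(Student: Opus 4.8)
The plan is to exploit the generalized-inverse representation set up above together with the invariance of $\pi_1$, reducing the claim to a single application of Lemma~\ref{lem:supermod}. Concretely, I realize the chain as $X_0\sim\pi_1$ with $U_1,U_2$ i.i.d.\ $\text{U}(0,1)$ independent of $X_0$, and $X_1=G_1(X_0,U_1)$, $X_2=G_2(X_1,U_2)$. The one structural input I need from the hypotheses is that $X_0=_{\rm d}X_1$: since $\pi_1$ is invariant for $p_1$ and $X_0$ is $\pi_1$-distributed, so is $X_1$. Notably, stochastic monotonicity of $p_1$ itself is not needed for \eqref{eq:stat012}; only that of $p_2$ enters, as will be seen.

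The core step is to condition on $U_2$. For a fixed $u\in(0,1)$ put $\chi_u(y):=G_2(y,u)$; by Lemma~\ref{lem:g}, the stochastic monotonicity of $p_2$ makes $\chi_u$ nondecreasing. Then, by the ``moreover'' part of Lemma~\ref{lem:supermod}, the function $\tilde h_u(x,y):=h(x,\chi_u(y))$ is again Borel supermodular. Because $X_2=\chi_{U_2}(X_1)$ with $U_2$ independent of $(X_0,X_1)$, I apply Lemma~\ref{lem:supermod} to the pair $(X_0,X_1)$ — which satisfies $X_0=_{\rm d}X_1$ — with the supermodular function $\tilde h_u$, obtaining for a.e.\ $u$ that
\[
\ee\big[h(X_0,\chi_u(X_1))\big]\le\ee\big[h(X_1,\chi_u(X_1))\big].
\]
Integrating over the law of $U_2$ and re-identifying $\chi_{U_2}(X_1)$ with $X_2$ yields $\ee h(X_0,X_2)\le\ee h(X_1,X_2)$, which is \eqref{eq:stat012}.

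The one place demanding care — and the only real obstacle — is integrability, since Lemma~\ref{lem:supermod} requires both $\ee\,\tilde h_u(X_0,X_1)$ and $\ee\,\tilde h_u(X_1,X_1)$ to be finite for the relevant $u$. I would handle this by a Fubini argument: from $h(X_0,X_2)=\tilde h_{U_2}(X_0,X_1)$ and the independence of $U_2$ one has $\ee|h(X_0,X_2)|=\int_0^1\ee|\tilde h_u(X_0,X_1)|\,\text{d}u$, and similarly for the $(X_1,X_1)$ term via $\ee|h(X_1,X_2)|$. Hence finiteness of the two expectations in the statement forces $\ee|\tilde h_u(X_0,X_1)|<\infty$ and $\ee|\tilde h_u(X_1,X_1)|<\infty$ for a.e.\ $u$, which is exactly what legitimizes the pointwise-in-$u$ application of the lemma and the final interchange of integral and inequality.

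For the covariance statement \eqref{eq:cov012}, the upper inequality is immediate: taking $h(x,y)=f_1(x)f_2(y)$, which is supermodular by Lemma~\ref{lem:supermod}, the already-proven \eqref{eq:stat012} gives $\ee[f_1(X_0)f_2(X_2)]\le\ee[f_1(X_1)f_2(X_2)]$, and subtracting $\ee f_1(X_0)\,\ee f_2(X_2)=\ee f_1(X_1)\,\ee f_2(X_2)$ (equal since $X_0=_{\rm d}X_1$) converts this into $\mathrm{Cov}(f_1(X_0),f_2(X_2))\le\mathrm{Cov}(f_1(X_1),f_2(X_2))$. For the nonnegativity I would condition on $X_0$: by Lemma~\ref{lem:n-step} the two-step kernel $p_1^2$ is stochastically monotone, so $\rho(x):=\ee[f_2(X_2)\,|\,X_0=x]$ is nondecreasing; since $f_1$ is nondecreasing as well, $f_1(x)\rho(y)$ is supermodular, and applying Lemma~\ref{lem:supermod} to $(X_0',X_0)$ with $X_0'$ an independent copy of $X_0$ gives $\ee f_1(X_0)\,\ee\rho(X_0)\le\ee[f_1(X_0)\rho(X_0)]$, i.e. $\mathrm{Cov}(f_1(X_0),f_2(X_2))=\mathrm{Cov}(f_1(X_0),\rho(X_0))\ge 0$. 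This is the one point in the corollary where stochastic monotonicity of $p_1$ (not merely invariance) is actually used.
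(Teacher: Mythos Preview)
Your proof is correct and follows essentially the same route as the paper's: construct the chain via generalized inverses, condition on $U_2$ so that $h(x,G_2(y,u))$ is supermodular in $(x,y)$, apply Lemma~\ref{lem:supermod} to the identically distributed pair $(X_0,X_1)$, and for the nonnegativity use that $\ee[f_2(X_2)\mid X_0]$ is a nondecreasing function of $X_0$. Your added Fubini check for the a.e.-in-$u$ finiteness and your remark that stochastic monotonicity of $p_1$ enters only in the nonnegativity step are nice clarifications that the paper leaves implicit.
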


\begin{proof}
Let $X'_0,U_1,U_2$ be independent with $U_1,U_2=_{\rm d}\text{U}(0,1)$ and $X'_0= X_0$. Then with $X'_1=G_1(X'_0,U_1)$ and $X'_2=G_2(X'_1,U_2)$ we have that $(X'_0,X'_1,X'_2)=_{\rm d} (X_0,X_1,X_2)$. Now, we note that since  $G_2(y,u_2)$ is nondecreasing in $y$, then (due to Lemma~\ref{lem:supermod}) $h(x,G_2(y,u_2))$ is supermodular in $x,y$ for every fixed $u_2$. Since $X'_0=_{\rm d} X'_1$, then it follows from Lemma~\ref{lem:supermod} that
\begin{equation}
\ee (h(X'_0,G_2(X'_1,U_2))\,|\,U_2)\le \ee (h(X'_1,G_2(X'_1,U_2))\,|\,U_2)\,.
\end{equation}
Taking expected values on both sides gives \eq{stat012}. Noting that $\ee f_1(X_0)\ee f_2(X_2)=\ee f_1(X_1)\ee f_2(X_2)$ and that $f_1(x)f_2(y)$ is supermodular gives the right inequality in \eq{cov012}. To show the left inequality, note that $G_2(G_1(x,u_1),u_2)$ is a nondecreasing function of $x$ and thus, so is $\gamma(x)=\ee f_2(G_2(G_1(x,U_1,U_2)))$. Now,
\begin{equation}
\text{Cov}(f_1(X_0),f_2(X_2))=\text{Cov}(f_1(X_0),\ee [f_2(X_2)|X_0])=\text{Cov}(f_1(X_0),\gamma(X_0))
\end{equation}
and it is well known that the covariance of comonotone random variables (whenever well defined and finite) must be nonnegative.
\end{proof}

A time-homogenous continuous-time Markov process $\{X_t\,|\,t\ge 0\}$ will be called stochastically monotone, whenever $p^t(x,A)=\pp_x(X_t\in A)$ is a stochastically monotone kernel for each $t>0$. We will say that it satisfies Condition~\ref{cond:1} whenever $p^t$ satisfies this condition for every $t>0$. Note that by Corollary~\ref{cor:sub} this is equivalent to the assumption that these conditions are satisfied for $0<t\le \epsilon$ for some $\epsilon>0$.

\begin{theorem}\label{cor:stat012hom}
Consider a stationary stochastically monotone discrete-time or continuous-time time-homogenous Markov process $\{X_t\,|\,t\ge0\}$. Then for every supermodular $h$ for which the following expectations exist and are finite, $\ee h(X_0,X_t)$ is nonincreasing in $t\ge 0$ where $t$ is either nonnegative integer or nonnegative real valued. In particular, when $f_1,f_2$ are nondecreasing and the appropriate expectations exist, $\text{\rm Cov}(f_1(X_0),f_2(X_t))$ is nonnegative and nonincreasing in $t\ge0$.
\end{theorem}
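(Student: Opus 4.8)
The goal is to prove Theorem~\ref{cor:stat012hom}: for a stationary stochastically monotone time-homogeneous Markov process (discrete or continuous time), $\ee h(X_0,X_t)$ is nonincreasing in $t$ for supermodular $h$, and consequently $\text{Cov}(f_1(X_0),f_2(X_t))$ is nonnegative and nonincreasing. My plan is to reduce this statement to Theorem~\ref{th:stat012}, which already handles a three-point chain $X_0,X_1,X_2$ in stationarity.

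The key observation is that the monotonicity in $t$ is really a statement comparing two time points against a common later time point. The plan is to fix $0\le s\le t$ and consider the three stationary snapshots $X_0$, $X_{t-s}$, and $X_t$. First I would invoke time-homogeneity and stationarity: because the process is stationary, $(X_0,X_t)=_{\rm d}(X_s,X_t)$, so it suffices to show $\ee h(X_s,X_t)\le \ee h(X_0,X_t)$, i.e.\ pushing the first coordinate \emph{later} (closer to the second) does not decrease the expectation. To apply Theorem~\ref{th:stat012} I would relabel the three relevant epochs $0\le t-s\le t$ (or any three increasing times) as a three-step chain. By Corollary~\ref{cor:sub}, any subsequence of the stochastically monotone chain is again stochastically monotone, so the kernels governing the transitions between these three chosen epochs are stochastically monotone; and by stationarity the marginal at each epoch is the invariant distribution $\pi_1$. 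Thus the triple $(X_0,X_{t-s},X_t)$ plays exactly the role of $(X_0,X_1,X_2)$ in Theorem~\ref{th:stat012}, whose conclusion \eqref{eq:stat012} gives $\ee h(X_0,X_t)\le \ee h(X_{t-s},X_t)$.

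The reader should be careful about the direction: Theorem~\ref{th:stat012} states $\ee h(X_0,X_2)\le \ee h(X_1,X_2)$, meaning the expectation \emph{increases} as the first coordinate moves from time $0$ up to the intermediate time $1$. Translating into our labels with $t-s$ as the intermediate epoch and $t$ fixed, this reads $\ee h(X_0,X_t)\le \ee h(X_{t-s},X_t)$. By stationarity $\ee h(X_{t-s},X_t)=\ee h(X_0,X_s)$, and therefore $\ee h(X_0,X_t)\le \ee h(X_0,X_s)$ for every $0\le s\le t$, which is precisely the asserted nonincreasingness. The covariance statement then follows by taking $h(x,y)=f_1(x)f_2(y)$, which is supermodular by Lemma~\ref{lem:supermod}; the nonincreasingness is immediate since the product of means $\ee f_1(X_0)\,\ee f_2(X_t)$ is constant in $t$ by stationarity, and nonnegativity at each fixed $t$ follows from the comonotonicity argument already used for the left inequality of \eqref{eq:cov012} in Theorem~\ref{th:stat012}.

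The main obstacle, though it is a mild one, is making the reduction from ``three specific epochs'' to ``arbitrary $0\le s\le t$'' rigorous in a way that simultaneously covers the discrete-time and continuous-time cases and the time-homogeneous setting. In discrete time, time-homogeneity plus the subsequence lemma directly give stochastically monotone (possibly inhomogeneous) kernels between any three ordered times, so the application is clean. In continuous time, the definition given just before the theorem says that $p^t$ is stochastically monotone for each $t>0$; combined with Corollary~\ref{cor:sub}, restricting to the three epochs $\{0,t-s,t\}$ yields a stochastically monotone three-step chain, so the same argument applies verbatim. The only point requiring a little care is the existence and finiteness of the relevant expectations at the intermediate epoch $t-s$, but this is guaranteed by stationarity, since every marginal equals $\pi_1$ and the hypotheses of the theorem already assume the pertinent expectations are finite.
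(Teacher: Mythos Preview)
Your core argument is exactly the paper's: apply Theorem~\ref{th:stat012} to the triple $(X_0,X_{t-s},X_t)$ and then use stationarity to rewrite $\ee h(X_{t-s},X_t)=\ee h(X_0,X_s)$, with the covariance claim following from $h(x,y)=f_1(x)f_2(y)$. One slip to fix: in your first paragraph the assertion $(X_0,X_t)=_{\rm d}(X_s,X_t)$ is false (stationarity gives $(X_0,X_{t-s})=_{\rm d}(X_s,X_t)$), and the sentence built on it points in the wrong direction; fortunately you never use it, and your second paragraph already contains the correct version---the paper also adds, via Lemma~\ref{lem:supermod}, the endpoint observation $\ee h(X_0,X_t)\le\ee h(X_0,X_0)$ to cover $t=0$ explicitly.
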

\begin{proof}
For every $0<t_1<t_2$ we have that $X_0,X_{t_2-t_1},X_{t_2}$ satisfy the conditions and hence the conclusions of Theorem~\ref{th:stat012} (for the discrete time case, recall Corollary~\ref{cor:sub}). By stationarity we have that $(X_{t_2-t_1},X_{t_2})=_{\rm d} (X_0,X_{t_1})$. Therefore
\begin{equation}
\ee h(X_0,X_{t_2})\le \ee h(X_{t_2-t_1},X_{t_2})=\ee h(X_0,X_{t_1})\,.
\end{equation}
Note that, since $X_t=_{\rm d} X_0$, Lemma~\ref{lem:supermod} implies that
\begin{equation}
\ee h(X_0,X_t)\le \ee h(X_0,X_0)
\end{equation}
so that $\ee h(X_0,X_t)$ is nonincreasing on $[0,\infty)$ and not just on $(0,\infty)$.
Since $\ee f_1(X_0)\ee f_2(X_t)=\ee f_1(X_0)\ee f_2(X_0)$, the result for the covariance follows by taking $h(x,y)=f_1(x)f_2(y)$.
\end{proof}

\begin{remark}\label{rem:mixing}{\rm
The following is a standard and very simple exercise in ergodic theory. Let $T$ be a measure-preserving transformation on $(\Omega,\mathcal{F},\mu)$, where $\mu$ is a $\sigma$-finite measure. This means that $\mu(T^{-1}(A))=\mu(A)$ for every $A\in\mathcal{F}$, where $T^{-1}(A)=\{\omega\in \Omega\,|\,T(\omega)\in A\}$. Then $T$ is mixing (in the sense that $\mu(A\cap T^{-n}B)\to \mu(A)\mu(B)$ as $n\to\infty$, for every $A,B\in\mathcal{F}$) if and only if for every $f_1,f_2:\Omega\to\mathbb{R}$ such that $\int f_i^2\text{d}\mu<\infty$ for $i=1,2$ we have that $\int f_1\cdot T^nf_2\text{d}\mu \to \int f_1\text{d}\mu\cdot  \int f_2\text{d}\mu$, where $T^nf_2(\omega)=f_2(T^n\omega)$. 

The implication of this, under the assumptions of Theorem~\ref{cor:stat012hom}, is that with such mixing conditions $\text{Cov}(f_1(X_0),f_2(X_n))\to 0$ for every Borel $f_1,f_2$ such that $\ee f_i^2(X_0)<\infty$ for $i=1,2$. This in particular holds when in addition $f_i$, $i=1,2$, are nondecreasing. This also implies that the same would hold in the continuous-time case as the covariance is nonincreasing (when $f_i$ are nondecreasing) and thus it suffices that it vanishes along any subsequence (such as $t_n=n$). In particular this will hold for any Harris-recurrent Markov process for which there exists a stationary distribution. In this case an even stronger form of mixing is known to hold (called {\em strong mixing} or $\alpha$-mixing, {\em e.g.}, see \cite{AP86}). All the examples discussed in Section~\ref{Motivation} for which a stationary distribution exists are in fact Harris-recurrent and even have a natural regenerative state. Alternatively, the same holds whenever the stationary distribution is unique and $f_1,f_2$ are nondecreasing with $\ee f_i(X_0)^2<\infty$ for $i=1,2$. This may be shown by adapting the proof of Theorem~4 of \cite{Daley} in which $f_1=f_2$. For all of our examples in which a stationary distribution exists, it is also unique.

Of course, we cannot expect that the covariance will vanish without such mixing conditions or uniqueness of the stationary distribution. For example, if $\xi$ is some random variable having a finite second moment and variance $\sigma^2>0$, set $X_t=\xi$ for all $t\ge 0$. Then $\{X_t\,|\, t\ge 0\}$ is trivially a stochastically monotone, stationary Markov process (and also trivially satisfies Condition~\ref{cond:1}), but (taking $f_1(x)=f_2(x)=x$) $\text{Cov}(X_0,X_t)=\sigma^2$ clearly does not vanish as $t\to\infty$. \hfill$\diamond$
}\end{remark}

We continue with two theorems in which Condition~\ref{cond:1} is imposed.
\begin{theorem}\label{th:stat0123}
Assume that $X_0,X_1,X_2,X_3$ is a stochastically monotone Markov chain satisfying Condition~\ref{cond:1}, where $p_1$ has an invariant distribution $\pi_1$ and $X_0$ is $\pi_1$ distributed. Then for every Borel supermodular $h:\mathbb{R}^2\to\mathbb{R}$,
\begin{equation}\label{eq:stat0123}
\ee h(X_0,X_2-X_3)\le \ee h(X_1, X_2-X_3)
\end{equation}
whenever the expectations exist and are finite. In particular, for $f$ nondecreasing and the appropriate expectations exist and are finite, then
\begin{equation}\label{eq:cov0123}
0\le \text{\rm Cov}(f(X_0),X_2)-\text{\rm Cov}(f(X_0),X_3)\le \text{\rm Cov}(f(X_1),X_2)-\text{\rm Cov}(f(X_1),X_3)\,.
\end{equation}
\end{theorem}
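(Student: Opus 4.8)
The structure of Theorem~\ref{th:stat0123} closely mirrors that of Theorem~\ref{th:stat012}, so the natural plan is to imitate that proof, with one crucial new ingredient: the increment $X_2-X_3$ must be handled in a way that respects Condition~\ref{cond:1}. The plan is to realize the chain via generalized inverses, write $X_0,X_1,X_2,X_3$ as successive applications of $G_1,G_2,G_3$ to independent uniforms, and then isolate the dependence on $x$ in the first coordinate of $h$ while showing that the second argument $X_2-X_3$, as a function of the realized value of $X_1$, is handled by a supermodular structure.

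\textbf{Key steps.} First I would set $X_0'=X_0$ and introduce i.i.d.\ uniforms $U_1,U_2,U_3$, defining $X_1'=G_1(X_0',U_1)$, $X_2'=G_2(X_1',U_2)$, $X_3'=G_3(X_2',U_3)$, so that $(X_0',X_1',X_2',X_3')=_{\rm d}(X_0,X_1,X_2,X_3)$. The goal is to condition on $(U_2,U_3)$ and compare $\ee\bigl(h(X_0',X_2'-X_3')\,|\,U_2,U_3\bigr)$ with $\ee\bigl(h(X_1',X_2'-X_3')\,|\,U_2,U_3\bigr)$. The essential observation is that, for fixed $u_2,u_3$, the quantity $X_2'-X_3'=G_2(X_1',u_2)-G_3(G_2(X_1',u_2),u_3)$ is a function of $X_1'$, call it $\psi(X_1')$, and by Condition~\ref{cond:1} (via Lemma~\ref{lem:g}, applied to $G_3$) together with the stochastic monotonicity (via Lemma~\ref{lem:g} applied to $G_2$), this function $\psi$ is \emph{nonincreasing} in $X_1'$: indeed $y-G_3(y,u_3)$ is nonincreasing in $y$ by Condition~\ref{cond:1}, and composing with the nondecreasing $G_2(\cdot,u_2)$ preserves this. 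Hence $h(x,\psi(y))$ is supermodular in $(x,y)$ whenever $h$ is supermodular, because $\psi$ nonincreasing turns the sign appropriately and Lemma~\ref{lem:supermod} (with $f_2=\psi$ being monotone, here nonincreasing, handled by the standard sign flip) applies to $h(\cdot,\cdot)$ composed on the second coordinate. Since $X_0'=_{\rm d}X_1'$, Lemma~\ref{lem:supermod} then yields
\begin{equation}
\ee\bigl(h(X_0',\psi(X_1'))\,|\,U_2,U_3\bigr)\le \ee\bigl(h(X_1',\psi(X_1'))\,|\,U_2,U_3\bigr),
\end{equation}
and taking expectations gives \eqref{eq:stat0123}.

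\textbf{The covariance inequalities.} For the specialization, I would take $h(x,z)=f(x)z$, which is supermodular since it is a product of the nondecreasing $f$ and the identity; substituting $z=X_2-X_3$ and expanding $\ee f(X_i)(X_2-X_3)=\ee f(X_i)X_2-\ee f(X_i)X_3$, then using that the product-of-means terms match because $X_0=_{\rm d}X_1$, produces the right inequality in \eqref{eq:cov0123}. For the left inequality I would mimic the end of the proof of Theorem~\ref{th:stat012}: write $\text{Cov}(f(X_0),X_2)-\text{Cov}(f(X_0),X_3)=\text{Cov}\bigl(f(X_0),\ee[X_2-X_3\mid X_0]\bigr)$ and argue that $x\mapsto\ee[X_2-X_3\mid X_0=x]=\ee\,\psi\bigl(G_1(x,U_1)\bigr)$ is a monotone function of $x$, so that the pair $\bigl(f(X_0),\ee[X_2-X_3\mid X_0]\bigr)$ is comonotone and its covariance is nonnegative.

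\textbf{Main obstacle.} The delicate point, and the place where Condition~\ref{cond:1} does its real work, is verifying that $\psi(y)=G_2(y,u_2)-G_3(G_2(y,u_2),u_3)$ is nonincreasing and that this nonincreasingness makes $h(x,\psi(y))$ supermodular. One must be careful that Lemma~\ref{lem:supermod}'s composition statement is phrased for nondecreasing inner functions, whereas $\psi$ is nonincreasing; the resolution is that $h(x,\psi(y))$ is supermodular precisely because reversing the monotonicity of the second inner function reverses the inequality direction into the one we want. I also expect a subtle point in the left inequality of \eqref{eq:cov0123}: the conditional mean $\ee[X_2-X_3\mid X_0=x]$ need not be nonincreasing even though $\psi$ is, since the outer expectation over $U_1$ and the monotonicity of $G_1$ interact; I would need to check that $x\mapsto\ee\,\psi(G_1(x,U_1))$ is genuinely monotone (the sign of its monotonicity only needs to be consistent to conclude comonotonicity with $f(X_0)$), and confirming this consistency is the part of the argument that most rewards care.
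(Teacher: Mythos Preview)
Your overall plan matches the paper's proof exactly, but there is a sign error at the crucial step that, if left uncorrected, would make both halves of the argument fail.

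You write that ``$y-G_3(y,u_3)$ is nonincreasing in $y$ by Condition~\ref{cond:1}''. This is backwards. Lemma~\ref{lem:g} says Condition~\ref{cond:1} is equivalent to $G_3(y,u_3)-y$ being nonincreasing in $y$; hence $y-G_3(y,u_3)$ is \emph{nondecreasing} in $y$. Composing with the nondecreasing $y\mapsto G_2(y,u_2)$ (stochastic monotonicity) therefore makes
\[
\psi(y)=G_2(y,u_2)-G_3\bigl(G_2(y,u_2),u_3\bigr)
\]
\emph{nondecreasing} in $y$, not nonincreasing. This is exactly what the paper uses: with $\psi$ nondecreasing, Lemma~\ref{lem:supermod} applies directly (no ``sign flip'' is needed) to conclude that $h(x,\psi(y))$ is supermodular in $(x,y)$, and then $X_0'=_{\rm d}X_1'$ gives \eqref{eq:stat0123} after conditioning on $(U_2,U_3)$.

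Your attempted rescue --- that a nonincreasing inner function ``reverses the inequality direction into the one we want'' --- is incorrect. If $\psi$ were nonincreasing, $h(x,\psi(y))$ would be \emph{submodular}, and Lemma~\ref{lem:supermod} applied to $-h(x,\psi(y))$ would yield $\ee h(X_0',\psi(X_1'))\ge \ee h(X_1',\psi(X_1'))$, the wrong direction. The same sign error propagates to your treatment of the left inequality in \eqref{eq:cov0123}: with the correct sign, $x\mapsto \psi(G_1(x,u_1))$ is nondecreasing (a composition of nondecreasing maps), so $\ee[X_2-X_3\mid X_0]$ is a nondecreasing function of $X_0$ and is genuinely comonotone with $f(X_0)$. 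Your worry that the monotonicity ``only needs to be consistent'' is misplaced: if one function of $X_0$ were increasing and the other decreasing, the pair would be countermonotone and the covariance would be nonpositive, not nonnegative. Once the sign of $\psi$ is corrected, both obstacles you flag disappear and your proof coincides with the paper's.
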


\begin{proof}
The proof is very similar to the proof of Theorem~\ref{th:stat012}. That is, we let $X'_0=X_0$, $X'_n=G_n(X'_{n-1},U_n)$ for $n=1,2,3$ where $X'_0,U_1,U_2,U_3$ are independent and $U_1,U_2,U_3=_{\rm d} \text{U}(0,1)$. From the stochastic monotonicity and Condition~\ref{cond:1} it follows that $G_2(x,u_2)-G_3(G_2(x,u_2),u_3)$ is nondecreasing in $x$. Therefore, by Lemma~\ref{lem:supermod} we have that, since $h(x,G_2(y,u_2)-G_3(G_2(y,u_2),u_3))$ is supermodular in $x,y$ and $X'_1=_{\rm d} X'_0$, we have that
\begin{align}
\ee [h(X'_0,X'_2-X'_3)|U_2,U_3]&=\ee [h(X'_0,G_2(X'_1,U_2)-G_3(G_2(X'_1,U_2),U_3))|U_2,U_3]\nonumber\\
&\le \ee [h(X'_1,G_2(X'_1,U_2)-G_3(G_2(X'_1,U_2),U_3))|U_2,U_3]\\
&=\ee [h(X'_1,X'_2-X'_3)|U_2,U_3]\ ,\nonumber
\end{align}
and taking expected values establishes \eq{stat0123}. Taking $h(x,y)=f(x)y$ gives the right inequality of \eq{cov0123}. The left inequality is obtained via comonotonicity, by observing that since $G_2(G_1(x,u_1),u_2)-G_3(G_2(G_1(x,u_1),u_2),u_3)$ is nondecreasing in $x$, then
$\ee [X_2-X_3|X_0]$ is a nondecreasing function of $X_0$, so that this inequality follows from the comonotonicity of $f(X_0)$ and $\ee [X_2-X_3|X_0]$ as in the proof of the left inequality of \eq{cov012}.
\end{proof}

\begin{theorem}\label{cor:stat0123hom}
Consider a stationary stochastically monotone discrete-time or continuous-time time-homogenous Markov process $\{X_t\,|\,t\ge0\}$, satisfying Condition~\ref{cond:1}. Then for every $s>0$ and every supermodular $h$ for which the following expectations exist and are finite, $\ee h(X_0,X_t-X_{t+s})$ is nonincreasing in $t$ where $t$ is either nonnegative integer or nonnegative real valued. In particular, when $f$ is nondecreasing and the appropriate expectations exist, $\text{\rm Cov}(f(X_0),X_t)$ is nonnegative, nonincreasing and convex in $t$.
\end{theorem}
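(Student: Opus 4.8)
The plan is to mirror the derivation of Theorem~\ref{cor:stat012hom}, but now invoking the four-point statement of Theorem~\ref{th:stat0123} in place of Theorem~\ref{th:stat012}. Fix $s>0$ and $0<t_1<t_2$. By Corollary~\ref{cor:sub} the subsampled process $(X_0,X_{t_2-t_1},X_{t_2},X_{t_2+s})$ is a stochastically monotone Markov chain satisfying Condition~\ref{cond:1}; its first transition kernel has the stationary law $\pi_1$ as invariant distribution (since $\pi_1$ is invariant for every kernel of the process), and $X_0\sim\pi_1$. Applying Theorem~\ref{th:stat0123} therefore gives
\begin{equation}
\ee h(X_0,X_{t_2}-X_{t_2+s})\le \ee h(X_{t_2-t_1},X_{t_2}-X_{t_2+s}),
\end{equation}
and by stationarity the right-hand side equals $\ee h(X_0,X_{t_1}-X_{t_1+s})$. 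Hence $t\mapsto \ee h(X_0,X_t-X_{t+s})$ is nonincreasing on $(0,\infty)$.

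It remains to include the endpoint $t_1=0$. The clean way to do this is to keep the same quadruple of sampling times $0,\,t_2-t_1,\,t_2,\,t_2+s$: when $t_1=0$ its two middle times coincide, so the corresponding (middle) transition kernel degenerates to the identity kernel $p(x,\cdot)=\delta_x$. A one-line check shows the identity kernel is stochastically monotone and satisfies Condition~\ref{cond:1}, since the relevant tail probabilities $p(x,(x+y,\infty))=\mathbf{1}_{\{y<0\}}$ are constant in $x$. Thus Theorem~\ref{th:stat0123} still applies, now to $(X_0,X_{t_2},X_{t_2},X_{t_2+s})$, and yields $\ee h(X_0,X_{t_2}-X_{t_2+s})\le \ee h(X_{t_2},X_{t_2}-X_{t_2+s})$, which by stationarity equals $\ee h(X_0,X_0-X_s)$, the value at $t=0$. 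This extends the monotonicity to all of $[0,\infty)$.

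For the covariance statement, nonnegativity and nonincreasingness of $g(t):=\text{\rm Cov}(f(X_0),X_t)$ follow immediately from Theorem~\ref{cor:stat012hom} applied with $f_1=f$ and $f_2$ the identity (both nondecreasing). For convexity I would specialize the first part to $h(x,y)=f(x)y$, which is supermodular by Lemma~\ref{lem:supermod}. Since stationarity makes $\ee X_t$ independent of $t$, the cross terms cancel and
\begin{equation}
g(t)-g(t+s)=\ee\bigl[f(X_0)(X_t-X_{t+s})\bigr]=\ee h(X_0,X_t-X_{t+s}),
\end{equation}
which is nonincreasing in $t$ by the first part; equivalently $g(t+s)-g(t)$ is nondecreasing in $t$ for every $s>0$.

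The final point --- turning nondecreasing fixed-lag increments into genuine convexity --- is where the two time regimes differ and is the step I expect to require the most care. In discrete time, taking $s=1$, the condition ``$g(t+1)-g(t)$ nondecreasing'' is exactly discrete convexity, so nothing further is needed. In continuous time the same condition delivers only midpoint (Jensen) convexity, $2g(t+s)\le g(t)+g(t+2s)$ for all $t\ge0$, $s>0$; to upgrade this to convexity I would invoke the classical fact that a midpoint-convex function that is monotone --- hence locally bounded --- is convex, and $g$ is nonincreasing by the previous step. A secondary, routine obstacle is to verify throughout that the finiteness-of-expectations hypotheses are inherited by the subsampled chains and by the specific choices $h(x,y)=f(x)y$ and $f_2=\mathrm{id}$, so that all the invoked results genuinely apply.
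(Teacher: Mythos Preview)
Your proof is correct and follows essentially the same route as the paper's: apply Theorem~\ref{th:stat0123} to the subsampled quadruple $(X_0,X_{t_2-t_1},X_{t_2},X_{t_2+s})$, use stationarity to rewrite the right-hand side, then specialize to $h(x,y)=f(x)y$ and upgrade midpoint convexity to convexity via monotonicity (the paper cites Blumberg and Sierpi\'nski for this last step). Your treatment is in fact slightly more careful than the paper's, which does not explicitly address the endpoint $t=0$ nor separate the discrete- and continuous-time cases for the convexity step.
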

In particular, note that when choosing $f(x)=x$ and assuming that $\ee X_0^2<\infty$, we see that, under the conditions of Theorem~\ref{cor:stat0123hom}, the auto-covariance $R(t)=\text{Cov}(X_s,X_{s+t})$ (or auto-correlation $R(t)/R(0)$ when $X_0$ is not a.s. constant) is nonnegative, nonincreasing and convex in $t$. 

\begin{proof}
Let $t_1<t_2$ then $X_0,\, X_{t_2-t_1},\, X_{t_2-t_1+s},\, X_{t_2+s}$ satisfy the conditions and hence the conclusion of Theorem~\ref{th:stat0123}. Therefore,
\begin{equation}
\ee h(X_0,X_{t_2}-X_{t_2+s})\le \ee h(X_{t_2-t_1},X_{t_2}-X_{t_2+s})=\ee h(X_0,X_{t_1}-X_{t_1+s})
\end{equation}
where the right equality follows from stationarity. When $f$ is nondecreasing then $h(x,y)=f(x)y$ is supermodular and thus
$\ee f(X_0)X_{t+s}-\ee f(X_0) X_t$ is nonincreasing in $t$ for every $s>0$. Therefore $\ee f(X_0)X_t$ is midpoint convex and since by Theorem~\ref{cor:stat012hom} it is nonnegative and nonincreasing (hence Borel), it must be convex (see \cite{Blumberg,Sierpinski}).
\end{proof}

Can anything be said for the case where the initial distribution is not invariant? Here is one possible answer.
\begin{theorem}\label{thm:sm}
Let $\{X_t\,|\,t\ge0\}$ be a  stochastically monotone discrete-time or continuous-time time-homogenous Markov process. Assume that the initial distribution can be chosen so that $X_0\le X_t$ a.s.\ for every $t\ge 0$. Then, 
\begin{description}
\item{(i)} $X_t$ is stochastically increasing in $t$.
\item{(ii)} For every Borel supermodular function which is nondecreasing in its first variable and for which the expectations exist and are finite, $\ee h(X_s,X_t)$ is nondecreasing in $s$ on $[0,t]$. When in addition Condition~\ref{cond:1} is satisfied, the same is true for $\ee h(X_s,X_t-X_{t+\delta})$ for every $\delta>0$ (whenever the expectations exist and are finite). 
\item{(iii)} When $h$ is nondecreasing in both variables (not necessarily supermodular) and expected values exist and are finite, then $\ee h(X_s,X_{t+s})$ is nondecreasing in $s$. When Condition~\ref{cond:1} is satisfied, the same is true for
$\ee h(X_s,X_{s+t}-X_{s+t+\delta})$ for $\delta>0$.
\item{(iv)} When $\ee X_t$ exists and is finite then it is nondecreasing and under Condition~\ref{cond:1} it is also concave.
\end{description}
\end{theorem}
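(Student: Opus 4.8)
The plan is to derive all four parts from statement (i) together with two coupling devices already used in the proofs of Theorems~\ref{th:stat012} and~\ref{th:stat0123}: the quantile (comonotone) coupling of two marginals, and the generalized-inverse representation $X_{v}=G^{(v-u)}(X_u,U)$ of a single transition, where $G^{(r)}$ denotes the generalized inverse associated with $p^{r}$ and $U=_{\rm d}\text{U}(0,1)$. For (i) I would argue as follows. Writing $\mu_v$ for the law of $X_v$, the hypothesis $X_0\le X_{t-s}$ a.s.\ gives $X_0\le_{\text{st}}X_{t-s}$, i.e.\ $\mu_0\le_{\text{st}}\mu_0 p^{t-s}$. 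By Lemma~\ref{lem:g} ($G(x,u)$ is nondecreasing in $x$), a stochastically monotone kernel preserves the stochastic order: if $\mu\le_{\text{st}}\nu$ then $\mu p^s\le_{\text{st}}\nu p^s$ (couple $X\le Y$ with the right marginals and apply the nondecreasing map $G^{(s)}(\cdot,U)$). Applying this to $\mu_0\le_{\text{st}}\mu_0p^{t-s}$ yields $X_s\sim\mu_0p^s\le_{\text{st}}\mu_0p^{t-s}p^s=\mu_0p^t\sim X_t$, which is (i). In continuous time it suffices to work with the finitely many time points occurring in each statement, whose consecutive transition kernels are stochastically monotone (resp.\ satisfy Condition~\ref{cond:1}) by Lemma~\ref{lem:n-step}.

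For (ii) I would follow the template of Theorem~\ref{th:stat012}. Fix $s_1\le s_2\le t$ and condition on the transition over $[s_2,t]$: writing $X_t=G^{(t-s_2)}(X_{s_2},U)$ and $H(x,y)=h\big(x,G^{(t-s_2)}(y,U)\big)$, Lemma~\ref{lem:supermod} makes $H$ supermodular in $(x,y)$ for each fixed $U$, and nondecreasing in $x$ because $h$ is; moreover $\ee h(X_{s_1},X_t)=\ee H(X_{s_1},X_{s_2})$ and $\ee h(X_{s_2},X_t)=\ee H(X_{s_2},X_{s_2})$. Under Condition~\ref{cond:1} the same reduction works for the decrement version, since exactly as in the proof of Theorem~\ref{th:stat0123} the map $y\mapsto G^{(t-s_2)}(y,U)-G^{(\delta)}\big(G^{(t-s_2)}(y,U),U'\big)$ is nondecreasing in $y$, so the corresponding $H$ is again supermodular and nondecreasing in its first argument. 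Everything thus reduces to the following strengthening of Lemma~\ref{lem:supermod}, which I regard as the crux: if $X\le_{\text{st}}Y$ and $H$ is Borel supermodular and nondecreasing in its first variable, then $\ee H(X,Y)\le\ee H(Y,Y)$. To prove it, let $(X^c,Y^c)$ be the comonotone coupling with the same marginals as $(X,Y)$; the supermodular inequality gives $\ee H(X,Y)\le\ee H(X^c,Y^c)$, while $X\le_{\text{st}}Y$ forces $X^c\le Y^c$ a.s.\ (their quantile functions are ordered), so monotonicity in the first variable gives $H(X^c,Y^c)\le H(Y^c,Y^c)$, whence $\ee H(X^c,Y^c)\le\ee H(Y^c,Y^c)=\ee H(Y,Y)$. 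Applying this conditionally on $U$ (independent of $(X_{s_1},X_{s_2})$) with $X=X_{s_1}$, $Y=X_{s_2}$ and the ordering $X_{s_1}\le_{\text{st}}X_{s_2}$ from (i) completes (ii). The delicate point, and the reason supermodularity is indispensable, is that here $X_{s_1}$ and $X_{s_2}$ are \emph{not} identically distributed (unlike the stationary case of Theorem~\ref{th:stat012}), and indeed $X_{s_1}\le X_{s_2}$ fails pathwise for reflected processes; the comonotone rearrangement is precisely what repairs this. This is the step I expect to be the main obstacle.

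For (iii) and (iv) I would instead use a genuine two-process monotone coupling. Given $s_1\le s_2$, start one chain $B$ from $\mu_{s_1}$ and another chain $A$ from $\mu_{s_2}$, coupling the initial values comonotonically so that $B_0\le A_0$ a.s.\ (possible by (i)), and drive both by the \emph{same} generalized-inverse maps $G^{(\cdot)}(\cdot,U)$. As each such map is nondecreasing, $B_u\le A_u$ a.s.\ at all subsequent times; since $h$ is nondecreasing in both variables, $h(B_0,B_t)\le h(A_0,A_t)$, which on taking expectations is the first assertion of (iii). Under Condition~\ref{cond:1}, Lemma~\ref{lem:g} gives that $G^{(\delta)}(x,U')-x$ is nonincreasing in $x$, so the gap $A_u-B_u$ is nonincreasing in $u$; this forces $A_t-A_{t+\delta}\ge B_t-B_{t+\delta}$, which together with $A_0\ge B_0$ yields the decrement version of (iii). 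Finally, for (iv): $\ee X_t$ is nondecreasing because $X_s\le_{\text{st}}X_t$ (from (i)) implies $\ee X_s\le\ee X_t$; and specializing the decrement version of (iii) to $h(x,y)=y$ shows that $\ee(X_u-X_{u+\delta})$ is nondecreasing in $u$, i.e.\ $\ee X_{u+\delta}-\ee X_u$ is nonincreasing in $u$, which is midpoint concavity of $t\mapsto\ee X_t$. Being also nondecreasing (hence Borel), $\ee X_t$ is then concave by the same measurability argument (\cite{Blumberg,Sierpinski}) used in Theorem~\ref{cor:stat0123hom}.
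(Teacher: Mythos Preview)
Your proposal is correct and close in spirit to the paper's argument, but your handling of part~(ii) is packaged differently and is worth noting. The paper exploits the a.s.\ hypothesis more directly: from $X'_0\le G_\epsilon(X'_0,U_0)$ (which holds because $p^\epsilon(x,[x,\infty))=1$ on the support of $X_0$) it propagates via the monotone map $G_s(\cdot,U_1)$ to obtain a \emph{pathwise} inequality $X'_s\le X''_{s+\epsilon}$ with $X''_{s+\epsilon}=_{\rm d}X'_{s+\epsilon}$, and then applies first the monotonicity of $h$ in its first variable and afterwards Lemma~\ref{lem:supermod} (which only needs identical marginals). You instead pass through~(i) to obtain $X_{s_1}\le_{\rm st}X_{s_2}$ and prove a genuine strengthening of Lemma~\ref{lem:supermod}: if $X\le_{\rm st}Y$ and $H$ is supermodular and nondecreasing in its first argument then $\ee H(X,Y)\le\ee H(Y,Y)$, established by combining the comonotone maximality of supermodular expectations with the pointwise ordering $X^c\le Y^c$ of the quantile coupling. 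The two routes are essentially dual---the paper's intermediate $X''_{s+\epsilon}$ plays the role of your $X^c_{s_1}$, and the order of the ``monotonicity'' and ``supermodular'' steps is simply reversed---but the paper gets by with the equal-marginals form of the inequality, whereas your route invokes the full upper-Fr\'echet bound; in return, your auxiliary lemma is a clean reusable statement. Parts~(i), (iii) and~(iv) match the paper's proof, yours being written out somewhat more explicitly as a two-process monotone coupling.
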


We note that it would suffice to assume that $X_0\le X_t$ for $t\in(0,\epsilon]$ for some $\epsilon>0$, or in discrete time for $t=1$. Also, we note that for (i) we can replace $X_0\le X_t$ by $X_0\le_{\text{st}}X_t$.

\begin{proof}
For any $s<t$ take $\epsilon\in(0,t-s]$ and let $G_v(x,u)$ be the generalized-inverse with respect to the kernel $p^v$. Let $U_0,U_1,\ldots$ be i.i.d. with $U_i=_{\rm d}\text{U}(0,1)$. Since $X_0\le X_\epsilon$ we have with $X'_0=X_0$, that $X'_0\le G_\epsilon(X'_0,U_0)$ and thus
\begin{equation}
X'_s\equiv G_s(X'_0,U_1)\le G_s(G_\epsilon(X'_0,U_0),U_1)\equiv X''_{s+\epsilon}=_{\rm d} X'_{s+\epsilon}\equiv G_\epsilon(X'_s,U_2)\,,
\end{equation}
implying stochastic monotonicity. 

Taking $X'_t=G_{t-s-\epsilon}(X'_{s+\epsilon},U_3)$, then $(X'_0,X'_s,X'_{s+\epsilon},X'_t)$ is distributed like $(X_0,X_s,X_{s+\epsilon},X_t)$.
Since $h$ is nondecreasing in its first variable and $X'_s\le X''_{s+\epsilon}$, it follows that
\begin{equation}\label{eq:h}
\ee h(X'_s,X'_t)\le \ee h(X''_{s+\epsilon},X'_t)\ .
\end{equation}
The (by now, repetitive) fact that
\begin{align}
\ee [h(X''_{s+\epsilon},X'_t)|U_3]&=\ee[h(X''_{s+\epsilon},G_{t-s-\epsilon}(X'_{s+\epsilon},U_3)|U_3]\nonumber \\ &\le \ee[h(X'_{s+\epsilon},G_{t-s-\epsilon}(X'_{s+\epsilon},U_3)|U_3]=\ee[h(X'_{s+\epsilon},X'_t)|U_3]
\end{align}
follows from the supermodularity of $h(x,G_{t-s-\epsilon}(y,u_3))$ in $x,y$. Taking expected values implies, together with \eq{h}, that $\ee h(X_s,X_t)$ is nondecreasing in $s$ on $[0,t]$.  The proof of the fact that, under Condition~\ref{cond:1}, $\ee h(X_s,X_t-X_{t+\delta})$ is nondecreasing in $s$ on $[0,t]$ is similar, once we define $X'_{t+\delta}=G_\delta(X'_t,U_4)$ and observe that $X'_t-X'_{t+\delta}=G_{t-s-\epsilon}(X'_{s+\epsilon},U_3)-G_\delta(G_{t-s-\epsilon}(X'_{s+\epsilon},U_3),U_4)$ is nondecreasing in $X'_{s+\epsilon}$.

When $h$ is nondecreasing in both variables we have that $h(X_s,X_{s+t})=_{\rm d} h(X_s,G_t(X_s,U_0))$ so that by stochastic monotonicity $\ee (h(X_s,G_t(X_s,U_0))|U_0)$ is nondecreasing in $s$ and hence also $\ee h(X_s,X_{s+t})$. The proof for
$\ee h(X_s,X_{s+t}-X_{s+t+\delta})$, under Condition~\ref{cond:1}, is similar.

Finally, since $X_t$ is stochastically increasing then it clearly follows that $\ee X_t$ is nondecreasing. When Condition~\ref{cond:1} is met, then taking $h(x,y)=y$ (nondecreasing in both variables) we have that $\ee X_{s+t}-\ee X_{s+t+\delta}$ is nondecreasing. This implies midpoint concavity, so that since $\ee X_t$ is monotone (hence Borel) it follows that it is concave (again, see \cite{Blumberg,Sierpinski}).
\end{proof}

We complete this section by noting that although, for the sake of convenience, all the results were written for the case where the state space is $\mathbb{R}$, they hold whenever the state space is any Borel subset of $\mathbb{R}$ as was assumed in \cite{Daley}; for instance, for  the examples discussed in Section~\ref{Motivation}, the state space is either $[0,\infty)$ or $\mathbb{Z}_+$.

{
\subsection{Role of stochastic monotone Markov processes and supermodularity}\label{SUP}
To put our work into perspective, we conclude this section by explaining the train of thought that led us to this research. One result that sparked this line of research was that for a stationary reflected (both one- and two-sided) L\'evy process $X$  with $\ee X_0^2<\infty$,
$R(t)=\text{Cov}(X_0,X_t)$ is nonnegative, nonincreasing and convex \cite{B19, EM,GM}. Since in this case $\ee X_t=\ee X_0$, this is equivalent to $\ee X_0X_t$ being nonnegative, nonincreasing and convex. Another result is that for a stationary discrete time stochastically monotone Markov process and a nondecreasing function $g$ with $\ee g^2(X_0)<\infty$, $\text{Cov}(g(X_0),g(X_n))$  is nonincreasing  \cite{Daley}. This is equivalent to $\ee g(X_0)g(X_n)$ being nonincreasing.  We observed that $xy$, $g(x)g(y)$ (when $g$ is nondecreasing) are supermodular functions of $x,y$.
Also, we knew that the one- and two-sided Skorohod reflection of a process of the form $x+X_t$ is monotone in $x$. When $X$ is a L\'evy process, then the reflected process is Markovian and from the monotonicity in $x$ it must be stochastically monotone.

Therefore, a natural question arose: is it true that for any stationary stochastically monotone Markov process (discrete or continuous time) and for any supermodular $h$ we have that $\ee h(X_0,X_t)$ is nonincreasing? If yes, then this would make the just mentioned results from \cite{B19,Daley,EM,GM}, each using a totally different (often quite involved) approach, simple special cases. We were happy to see that the answer to this question was `yes'.

The next step was to try to identify what condition ensures convexity of the correlation or even more general function. As seen in Section~\ref{main} the condition that ensures convexity of $\ee g(X_0)X_t$ for a nondecreasing function $g$ (in particular $g(x)=x$) was Condition~\ref{cond:1}. We lack a full intuitive understanding as to why it is this precise condition that makes it work. 
As will be seen in Section~\ref{Motivation}, various natural Markovian models satisfy stochastic monotonicity and Condition~\ref{cond:1}.

Finally, we also knew that for a (one- and two-sided) reflected L\'evy process $X$ starting from the origin (and actually also more general processes), $\ee X_t$ is nonincreasing and concave \cite{AM,K92,KS94,KW96}. We were asking ourselves whether this too may be seen as a special case of the more general theory we discovered, and, as it turned out, the answer to this question was affirmative as well.
}

 \section{Examples}\label{Motivation}

The purpose of this section is to give a number of motivating examples for the kind of processes to which our theory applies.
We chose these examples to illustrate the huge potential of the methodology developed in this paper: it can serve to make proofs of existing results significantly more transparent and compact (the examples with reflected L\'evy processes), and it can serve to derive entirely new structural properties (the examples with the L\'evy dam, the state dependent random walk and the birth and death processes).

\subsection{L\'evy process reflected at the origin} \label{1reflect}\label{rlp}
Consider a \cadlag L\'evy process $Y=\{Y_t\,|\,t\ge 0\}$ with $\pp(Y_0=0)=1$ (not necessarily spectrally one-sided).  For every $x$, the one-sided (Skorokhod) reflection map, with reflection taking place at level 0, is defined through
\begin{equation}
X_t(x)=x+Y_t-\inf_{0\le s\le t}(x+Y_s)\wedge0=Y_t+L_t\wedge x
\end{equation}
where $L_t=-\inf_{0\le s\le t}Y_s\wedge 0$ with $L_t(x)=(L_t-x)^+$ (so, in particular, $L_t=L_t(0)$). The pair $(L_t(x),X_t(x))$ is known to be the unique process satisfying
\begin{description}
\item{(i)} $L_t(x)$ is right continuous, nondecreasing in $t$, with $L_0=0$.
\item{(ii)} $X_t(x)$ is nonnegative for every $t\ge 0$.
\item{(iii)} For every $t>0$ such that $L_s(x)<L_t(x)$ for every $s<t$ we have that $X_t(x)=0$.
\end{description}
It is known  \cite{kella06} that (iii) is equivalent to the condition that \begin{equation}\int_{[0,\infty)}X_s(x)L_{\text{d}s}(x)=0,\end{equation}
or alternatively to the condition that $L_t(x)$ is the minimal process satisfying (i) and (ii). Special cases of such processes are the workload process in an M/G/1 queue and the (one dimensional) reflected  Brownian motion (where the reflection takes place at $0$).

It turns out that when $x=0$, then $\ee X_t$ is nondecreasing and concave in $t$ \cite{K92,KS94}. When $\ee Y_1<0$, then this process, which is well known to be Markovian, has a stationary distribution. If $W$ has this stationary distribution and is independent of the process $Y$, then $X^*_t=X_t(W)$ is a stationary process and it has been established \cite{EM,GM,OTT} that when $\ee W^2<\infty$, the autocovariance $R(t)=\text{Cov}(X_s^*,X_{s+t}^*)$ (or autocorrelation $R(t)/R(0)$) is nonnegative, nonincreasing and convex in~$t$. Where earlier proofs tended to be ad-hoc (e.g.\ dealing with spectrally one-sided processes only, {\em i.e.,} assuming that the L\'evy process has jumps in one direction) and involved (e.g.\ requiring delicate manipulations with completely monotone functions), with the techniques developed in the present paper this property now follows virtually immediately. The remainder of this subsection illustrates this.

Let us denote $p^t(x,A)=\pp(X_t(x)\in A)$, so that $p^t(\cdot,\cdot)$ is the transition kernel of the associated Markov process. 
Since $X_t(x)=Y_t+L_t\wedge x$ is nondecreasing in $x$ and $X_t(x)-x=Y_t-(x-L_t)^+$ is nonincreasing in $x$ (for each $t>0$) we clearly have that
\begin{equation}
p^t(x,(y,\infty))=\pp(X_t(x)>y)
\end{equation} 
is nondecreasing in $x$ and
\begin{equation}
p^t(x,(x+y,\infty))=\pp(X_t(x)>x+y)=\pp(Y_t-(x-L_t)^+>y)
\end{equation}
is nonincreasing in $x$. Therefore stochastic monotonicity and Condition~\ref{cond:1} are satisfied.

\begin{remark}\label{rem:GG1}{\rm
We note that for the same reasons, a (general) random walk reflected a the origin is a discrete time version of the process featuring in the above setup. As a consequence, it is also stochastically monotone and satisfies Condition~\ref{cond:1}. In particular, this applies to the consecutive waiting times upon arrivals of customers in a GI/GI/1 queue.}
\end{remark}

\subsection{L\'evy process with a two-sided reflection}\label{drlp}
In this subsection we argue that the structural properties discussed in the previous subsection carry over to the case of a two-sided reflection.
With $Y$ defined in Section~\ref{1reflect}, a two-sided (Skorokhod) reflection in $[0,b]$ for $b>0$ (and similarly in $[a,b]$ for any $a<b$) is defined as the unique process $(X_t(x),L_t(x),U_t(x))$, with $X_t(x)=x+Y_t+L_t(x)-U_t(x)$, satisfying
\begin{description}
\item(i) $L_t(x),\,U_t(x)$ are right continuous and nondecreasing with $L_0(x)=U_0(x)=0$.
\item(ii) $X_t(x)\in [0,b]$ for all $t\ge 0$.
\item(iii) For every $t>0$ such that $L_s(x)<L_t(x)$ (resp., $U_s(x)<U_t(x)$) for every $s<t$, $X_t(x)=0$ (resp., $X_t(x)=b$).
\end{description}
Also here, (iii) is equivalent to \begin{equation}\int_{[0,\infty)}X_t(x)L_{\text{d}t}(x)=\int_{[0,\infty)}(b-X_t(x))U_{\text{d}t}(x)=0.\end{equation} 

{
For this case it is also known that $\ee X_t(0)$ is nondecreasing and concave \cite{AM} as well as that for the stationary version the autocovariance $R(t)$ is nonnegative nondecreasing and convex \cite{B19}. As in the one-sided case, we can apply our newly developed results to establish these facts in a very compact manner provided we verify that the process under consideration is a stochastically monotone Markov process that fulfills Condition~\ref{cond:1}.
}

Since $Y$ is a L\'evy process, we have that $X_t(x)$ is a time-homogenous Markov process starting at $x$. The driving process $Y$ being the same for both $X_t(x)$ and $X_t(y)$, we find that choosing $x<y$ means that $X_t(x)$ can never overtake $X_t(y)$. Consequently, $X_t(x)$ is nondecreasing in $x$ and thus the Markov chain is stochastically monotone (with some effort, this can also be shown directly from representation \eq{double} to follow). 

In order to verify that it satisfies Condition~\ref{cond:1}, we recall from \cite{klrs07}, upon re-denoting by $X^0_t(x)$ the one-sided reflected process described in Section~\ref{1reflect}, that
\begin{equation}\label{eq:double}
X_t(x)=X^0_t(x)-\sup_{0\le s\le t}\left[\left(X_s^0(x)-b\right)^+\wedge\inf_{s\le u\le t}X_t^0(x)\right]\ .
\end{equation}
Since $X_t^0(x)$ is nondecreasing and $X_t^0(x)-x$ is nonincreasing in $x$ (as explained in Section~\ref{1reflect}), it immediately follows that $X_t(x)-x$ is nonincreasing in $x$, which implies Condition~\ref{cond:1}. 

{
Regarding the results for $R(t)$ that hold under stationarity, observe that
in this two-sided reflected case a stationary distribution always exists and has a bounded support. Therefore, we do not need to impose any additional requirements on $Y$. This is in contrast to the one-sided case where it was needed to assume that $\ee Y_1<0$ and that the stationary distribution has a finite second moment. As in the case with one-sided reflection, the findings carry over to the discrete time counterpart; the two-sided reflected random walk.
}

\subsection{L\'evy dams with a nondecreasing  release rule}
In the previous two subsections, dealing 
with the one and two-sided reflected L\'evy process, we mentioned that existing (and also some previously nonexistent) results may be instantly concluded from the theory that we develop in this paper.
The present subsection gives an illustration of our theory's potential to conclude similar results for dam processes which, to the best of our knowledge, is completely new for these kind of processes and was not known earlier. More concretely, it shows that, with the general theory that we developed, the
structural results discussed above carry over to more than just reflected L\'evy processes.

Let the process $J=\{J_t\,|\,t\ge 0\}$ be a right continuous subordinator (nondecreasing L\'evy process) with $\pp(J_0=0)=1$ and let $r:[0,\infty)\to [0,\infty)$ be nondecreasing, left continuous on $(0,\infty)$, with $r(0)=0$. Consider the following {\it dam process}:
\begin{equation}\label{eq:dam}
X_t(x)=x+J_t-\int_0^tr(X_s(x))\text{d}s\,.
\end{equation}
It is well known \cite{cp72} that, under the stated assumptions, the solution to \eq{dam} is unique (pathwise) and belongs to the class of time-homogeneous Markov processes. 

As before, we need to check that the process under consideration is stochastically monotone and fulfils Condition~\ref{cond:1}. For $x<y$ we have that
\begin{equation}
X_t(y)-X_t(x)=y-x-\int_0^t(r(X_s(y)-r(X_s(x))\text{d}s\ .
\end{equation}
Denote $\tau$ to be first time (if it exists) for which the right side is zero. Because $x<y$, then for every $t<\tau$ we have that $X_t(x)<X_t(y)$. On $\tau<\infty$ we clearly  have that $X_\tau(x)=X_\tau(y)$ and therefore we also have that for any $h\ge 0$,
\begin{align}
X_{\tau+h}(x)&=X_\tau(x)+J_{\tau+h}-J_\tau+\int_0^h r(X_{\tau+s}(x))\text{d}s\nonumber\\
&=X_\tau(y)+J_{\tau+h}-J_\tau+\int_0^h r(X_{\tau+s}(x))\text{d}s=X_{\tau+h}(y)\,,
\end{align}
where the right equality follows from the uniqueness of the solution $Z$ to the equation
\begin{equation}
Z_h=z+J_{\tau+h}-J_\tau-\int_0^hr(Z_s)\text{d}s\,.
\end{equation}
Therefore, we have that $X_t(x)\le X_t(y)$ for every $t\ge 0$. Moreover, note that
\begin{equation}
(X_t(x)-x)-(Y_t(y)-y)=\int_0^t(r(X_s(y))-r(X_s(x))\text{d}s
\end{equation}
and thus, since $r$ is assumed to be nondecreasing, we also have that $X_t(x)-x\ge X_t(y)-y$. The conclusion is that $X_t(x)$ is nondecreasing in $x$ and $X_t(x)-x$ is nonincreasing in $x$. In other words, the process considered is a stochastically monotone Markov process satisfying Condition~\ref{cond:1}. 
As a consequence, the structural properties on $\ee X_t$ and $R(t)$, as we discussed above, apply to this process as well. 
Regarding $R(t)$, we note that here a stationary distribution exists whenever $\ee J_1<r(x)$ for some $x>0$ (recalling that $r(\cdot)$ is nondecreasing).

So as to perform a sanity check, we note that when choosing $r(x)=rx$ the resulting process is  a (generalized) shot-noise process. In this case it is well known that we can explicitly write 
\begin{equation}X_t(x)=xe^{-rt}+\int_{(0,t]}e^{-r(t-s)}J_{\text{d}s}.
\end{equation} In this setting it is immediately clear that $X_t(x)$ is nondecreasing and $X_t(x)-x$ is nonincreasing in $x$. We observe that here, if $\ee X_0^2<\infty$, then $R(t)=\text{Cov}(X_0,X_t)=\text{Var}(X_0)e^{-rt}$, so that $R(t)/R(0)=e^{-rt}$, which is, as expected, nonnegative, nonincreasing, convex in $t$ (for any distribution of $X_0$ having a finite second moment) and also converges to zero as $t\to\infty$. It is well known that in this particular case, the stationary distribution has a finite second moment if and only if $J_1$ has a finite second moment. This  is equivalent to requiring that $\int_{(1,\infty)}x^2\nu(\text{d}x)<\infty$, where $\nu$ is the associated L\'evy measure.

\begin{remark}{\rm
We note that if we replace $J$ by any L\'evy process and $-r(\cdot)$ by $\mu(\cdot)$ where $\mu$ is nonincreasing (not necessarily negative), then whenever the process described by \eq{dam} is well defined, we have for the same reasons as for the dam process, stochastic monotonicity and the validity of  Condition~\ref{cond:1}. This is in particular the case when $J_t=\sigma B_t$ and $B$ is a Wiener process,  resulting in a diffusion with constant diffusion coefficient and a nonincreasing drift.}
\end{remark}

\subsection{Discrete time (state dependent) random walks}\label{RW}
Consider a discrete time random walk on $\mathbb{Z}_+$ with the following transition probabilities:
\begin{equation}
p_{ij}=\begin{cases}
q_i&i\ge 1,\,j=i-1\\
r_i&i\ge 0,\,j=i\\
p_i&i\ge 0,\,j=i+1\\
0&\text{otherwise,}
\end{cases}
\end{equation}
where $q_i+r_i+p_i=1$ for $i\ge 1$ and $r_0+p_0=1$. It is a trivial exercise to check that $p(i,A)=\sum_{j\in A}p_{ij}$ is stochastically monotone if and only if $p_{i-1}\le 1-q_i$ for $i\ge 1$ and satisfies Condition~\ref{cond:1} if and only if $q_i$ is nondecreasing in $i$ and $p_i$ is nonincreasing in $i$. From Lemma~\ref{lem:n-step} it may be concluded that these are also the respective conditions for this Markov chain to be stochastically monotone and satisfies Condition~\ref{cond:1}. Therefore, the condition that both are satisfied is that for each $i\ge 1$
\begin{equation}
p_i\le p_{i-1}\le 1-q_i\le 1-q_{i-1}
\end{equation}
where $q_0\equiv 0$. 

From this it immediately follows that if $r_i=0$ for all $i\ge 1$ then the stochastic monotonicity together with Condition~\ref{cond:1} is equivalent to the assumption that $p_i=p$ for $i\ge 0$ and $r_0=q_i=1-p$ for $i\ge 1$. This is precisely the reflected process associated with $\sum_{k=1}^n Y_k$ (zero for $n=0$) where $Y_k$ are i.i.d. with ${\mathbb P}(Y=1)=1-{\mathbb P}(Y=-1)=p$ which is a simple special case of Remark~\ref{rem:GG1}.

\subsection{Birth and death processes}
Consider a right-continuous birth and death process on $\mathbb{Z}_+$ with birth rates $\lambda_i$, $i\ge 0$ and death rates $\mu_i\ge 1$. If the process is explosive we assume that the `cemetary' state is $+\infty$ and that from a possible time of explosion the process remains there forever (a minimal construction, that is).

 As observed in \cite{KK77}, this Markov process is stochastically monotone. To see this (for the sake of ease of reference), simply start two such independent processes from different initial states. By independence they do not jump at the same time (a.s.) and thus as long as they do not meet in a given state one remains strictly above the other. If and when they do meet (stopping time + strong Markov property) we let them continue together. If the higher process explodes then it remains at infinity forever and thus the bottom process never overtakes the top one. This coupling immediately implies stochastic monotonicity.
 
 Therefore it remains to identify when Condition~\ref{cond:1} holds. For the random walk of Subsection~\ref{RW} the condition was that $p_i$ is nonincreasing and $q_i$ is nondecreasing in $i$. Therefore it is conceivable that the condition here should be that $\lambda_i$ is nonincreasing and $\mu_i$ is nondecreasing in $i$. 
 This indeed turns out to be correct. In order to see this we will first need the following lemma.
 
\begin{lemma}\label{LBD}
$\Rightarrow 2$: For $k=1,2$, let $X^k=\{X_t^k|\,t\ge 0\}$ be a right continuous birth and death process on $\mathbb{Z}_+$ with birth rates $\lambda_i^k$ for $i\ge 0$ and death rates $\mu_i^k$ for $i\ge 1$. Assume that when the process is explosive then the `cemetary' is $+\infty$ and that the process remains there from the possible point of explosion and on. Then, the following two conditions are equivalent:
\begin{description}
\item{(i)}  $X^1_t\le_{{\rm st}}X^2_t$ for every $t\ge 0$ for any choice of initial distributions such that $X^1_0\le_{{\rm st}} X^2_0$.
\item{(ii)} $\lambda_i^1\le \lambda_i^2$ for every $i\ge 0$ and $\mu_i^1\ge \mu_i^2$ for every $i\ge 0$.
\end{description}
\end{lemma}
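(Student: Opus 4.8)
The plan is to prove the two implications separately. The implication $(ii)\Rightarrow(i)$ is the substantive comparison statement and I would establish it by an explicit order-preserving coupling; the converse $(i)\Rightarrow(ii)$ I would extract from the short-time behaviour of the transition probabilities at deterministic starting states. For $(ii)\Rightarrow(i)$ I would first reduce the hypothesis $X^1_0\le_{\text{st}}X^2_0$ to $X^1_0\le X^2_0$ almost surely: on $\mathbb{Z}_+$ a stochastic ordering of the initial laws is realized by a monotone quantile coupling, exactly in the spirit of the generalized inverse $G$ used in Lemma~\ref{lem:g}. Having coupled the starting states, it suffices to construct a coupling of the two processes that preserves the order at all later times.

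The coupling I would use is a bivariate Markov process $(X^1_t,X^2_t)$ living on $\{(m,n):m\le n\}$ (with $(\infty,\infty)$ as an absorbing cemetery), whose two marginals are birth and death processes with the prescribed rates. On the diagonal I would let the state $(i,i)$ jump to $(i+1,i+1)$ at rate $\lambda^1_i$ (common birth), to $(i,i+1)$ at rate $\lambda^2_i-\lambda^1_i$ (excess birth of the upper chain), to $(i-1,i-1)$ at rate $\mu^2_i$ (common death), and to $(i-1,i)$ at rate $\mu^1_i-\mu^2_i$ (excess death of the lower chain); off the diagonal, at a state $(m,n)$ with $m<n$, the two coordinates evolve independently according to their own birth and death rates. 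Nonnegativity of the excess rates $\lambda^2_i-\lambda^1_i$ and $\mu^1_i-\mu^2_i$ is precisely condition $(ii)$, and summing the outgoing rates recovers $\lambda^1,\mu^1$ for the first coordinate and $\lambda^2,\mu^2$ for the second, so both marginals are the required birth and death processes. Since every listed transition sends $m\le n$ to a pair that again satisfies the first coordinate $\le$ the second (on the diagonal the joint moves preserve equality or open a gap in the correct direction; off the diagonal a gap of at least one cannot be closed the wrong way by a single $\pm1$ jump), the invariant $X^1_t\le X^2_t$ holds for all $t$, which yields $(i)$.

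For the converse $(i)\Rightarrow(ii)$, I would fix $i\in\mathbb{Z}_+$ and start both chains deterministically at $i$, a choice that trivially satisfies $X^1_0\le_{\text{st}}X^2_0$; hence $(i)$ gives $\pp(X^1_t>y)\le\pp(X^2_t>y)$ for all $y$ and all $t\ge 0$. The standard short-time expansion for a birth and death process started at $i$ reads $\pp_i(X_t=i+1)=\lambda_i t+o(t)$ and $\pp_i(X_t=i-1)=\mu_i t+o(t)$, with any state at graph-distance $\ge 2$ reached only with probability $o(t)$. Evaluating the stochastic-order inequality at the threshold $y=i$ gives $\lambda^1_i t+o(t)\le\lambda^2_i t+o(t)$, and at $y=i-1$ gives $1-\mu^1_i t+o(t)\le 1-\mu^2_i t+o(t)$; dividing by $t$ and letting $t\downarrow 0$ delivers $\lambda^1_i\le\lambda^2_i$ and $\mu^1_i\ge\mu^2_i$, which is $(ii)$.

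The point requiring the most care, and hence the main obstacle, is the rigour of the coupling in $(ii)\Rightarrow(i)$ in the presence of possible explosion and of the switching between the synchronized diagonal dynamics and the independent off-diagonal dynamics. I would verify that the bivariate process is the minimal process associated with the above rates and that gluing the independent regime to the synchronized regime at the (stopping) times when the two coordinates meet produces genuine birth and death marginals. Explosion is then compatible with the order: because the invariant $m\le n$ is maintained and all jumps have size one, the lower chain cannot reach $+\infty$ strictly before the upper one, so freezing an exploded chain at the cemetery $+\infty$ respects $X^1_t\le X^2_t$ throughout.
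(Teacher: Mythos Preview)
Your proposal is correct and follows essentially the same approach as the paper: the same quantile coupling of initial states, the same bivariate Markov coupling (independent off the diagonal, synchronized with excess-rate corrections on the diagonal) for $(ii)\Rightarrow(i)$, and the same short-time/orderly argument at deterministic starting states for $(i)\Rightarrow(ii)$. The only difference is cosmetic---you flag the explosion and regime-switching issues more explicitly than the paper does.
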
 

\begin{proof}
We use the standard notation ${\mathbb P}_i$ to indicate the probability measure when the initial state is $i$. Also recall that a right continuous birth and death process is {\em orderly} in the sense that the probably that there are two or more jumps in the interval $[0,t]$ starting from any initial state is $o(t)$. This implies that $\lim_{t\downarrow 0}t^{-1}{\mathbb P}_i(|X^k_t-i|\ge 2)=0$ and thus 
\begin{equation}
\lim_{t\downarrow0}\frac{1}{t}{\mathbb P}_i(X^k_t\ge i+1)=\lim_{t\downarrow0}\frac{1}{t}{\mathbb P}_i(X_t^k=i+1)=\lambda_i
\end{equation} 
for $i\ge 0$. Similarly, $\lim_{t\downarrow0}t^{-1}{\mathbb P}_i(X^k_t\le i-1)=\mu_i$ for $i\ge 1$.

To show (i) $\Rightarrow$ (ii), suppose that $X_0^1=X_0^2=i$ a.s.\ (clearly $X^1_0\le_{\text{st}} X^2_0$). From $X^1_t\le_{\text{st}}X^2_t$ we have that ${\mathbb P}_i(X_t^1\ge i+1)\le {\mathbb P}_i(X_t^2\ge i+1)$ for $i\ge 0$ and $t>0$. Dividing by $t$ and letting $t\downarrow 0$ gives $\lambda^1_i\le \lambda^2_i$. Similarly, since ${\mathbb P}_i(X_t^1\ge i)\le {\mathbb P}_i(X_t^2\ge i)$ we have that
${\mathbb P}_i(X_t^1\le i-1)\le {\mathbb P}_i(X_t^2\ge i-1)$ for $i\ge 1$ and $t>0$. Dividing by $t$ and letting $t\downarrow 0$ gives $\mu_i^1\ge \mu_i^2$.

It remains to show (ii) $\Rightarrow$ (i). If we can find a coupling such that $X^1_t\le X^2_t$ a.s.\ for each $t\ge 0$ then the Lemma instantly follows. It is well known that if $X_0^1\le_{\text{st}}X_0^2$, then there are $(\tilde X^1_0,\tilde X^2_0)$ such that $\tilde X^k_0=_{\rm d} X^k_0$ and $\tilde X^1_0\le \tilde X^2_0$ a.s. Now assume that $(X^1,X^2)$ is a right continuous Markov process on $\{(i,j)|\,0\le i\le j\}$ with the following transition rates (with $\mu^1_0=\mu^2_0=0$).
For $i<j$ we have
\begin{equation}
\begin{array}{ccccc}
&&(i,j+1)\\ \\
&&{\scriptstyle\lambda^2_j}\Big\uparrow\hspace{3mm}\\ \\
(i-1,j)&\stackrel{\mu^1_i}{\longleftarrow}&(i,j)&\stackrel{\lambda^1_i}{\longrightarrow}&(i+1,j)\\ \\
&&{\scriptstyle\mu^2_j}\Big\downarrow\hspace{3mm}\\ \\
&&(i,j-1)
\end{array}
\end{equation}

\medskip\noindent
whereas for $i=j$
\begin{equation}
\begin{array}{ccccc}
&&(i,i+1)&&(i+1,i+1)\\ \\
&&{\scriptstyle\lambda^2_i-\lambda^1_i}\Big\uparrow\hspace{8mm}&{\scriptstyle\lambda^1_i}\nearrow\\ \\
(i-1,i)&\stackrel{\mu^1_i-\mu^2_i}{\longleftarrow}&(i,i)\\ \\
&{\scriptstyle\mu^2_i}\swarrow\\ \\
(i-1,i-1)
\end{array}
\end{equation}
All other transition rates are zero. Since the process lives on $\{(i,j)|\,0\le i\le j\}$ it is clear that $X^1_t\le X^2_t$ a.s.\ for each $t$. It is easy to check that for each $k=1,2$, $X^k$ is a birth a death process with birth rates $\lambda^k_i$ and death rates $\mu^k_i$.
\end{proof}

Note that the processes suggested in the proof run independently as long as they do not meet, and only if and when they meet something else happens that ensures that the bottom process does not overtake the top one.

\begin{theorem}\label{CBD}
Assume that $X=\{X_t|\,t\ge 0\}$ is a right continuous birth and death process with birth rates $\lambda_i$ for $i\ge 0$ and death rates $\mu_i$ for $i\ge 1$. Also assume that the `cemetery' (when explosive) is $+\infty$ and that the process remains there forever from such an epoch and on. Then $X$ satisfies Condition~1 if and only if $\lambda_i$ is nonincreasing in $i$ and $\mu_i$ is nondecreasing in $i$.
\end{theorem}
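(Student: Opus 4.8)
The plan is to reduce Condition~\ref{cond:1} to a single one-step stochastic comparison between two birth–death processes started one unit apart, and then to read off the rate conditions directly from Lemma~\ref{LBD}. Writing $X_t(i)$ for the process started at $i$, so that $p^t(i,\cdot)=\pp_i(X_t\in\cdot)$, Condition~\ref{cond:1} says that $p^t(i,(i+y,\infty))$ is nonincreasing in $i$ for every $t>0$, which is precisely the statement that the centered variable $X_t(i)-i$ is stochastically nonincreasing in $i$. Since $\le_{\rm st}$ is transitive and the state space is $\mathbb{Z}_+$, it suffices to compare consecutive states, and since $\le_{\rm st}$ is invariant under adding a constant, Condition~\ref{cond:1} is equivalent to
\[
X_t(i+1)\le_{\rm st}X_t(i)+1\qquad\text{for all }i\ge0,\ t>0 .
\]
By the remark following the continuous-time version of Condition~\ref{cond:1}, verifying this for all $t$ is what is needed (the coupling below will in fact deliver it for every $t$ at once).

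For the ``if'' direction I would introduce the two birth–death processes $X^1:=X(i+1)$ and $X^2:=X(i)+1$, both started at the common state $i+1$. The process $X^1$ has birth rates $\lambda_k$ and death rates $\mu_k$, while $X^2$ lives on $\{1,2,\dots\}$ with birth rate $\lambda_{k-1}$ and death rate $\mu_{k-1}$ at state $k$ (so at $k=1$ the death rate is $\mu_0=0$, which is why $X^2$ cannot reach $0$). I would then invoke the implication (ii)$\Rightarrow$(i) of Lemma~\ref{LBD}: its rate condition $\lambda^1_k\le\lambda^2_k$ and $\mu^1_k\ge\mu^2_k$ becomes $\lambda_k\le\lambda_{k-1}$ and $\mu_k\ge\mu_{k-1}$, i.e.\ exactly that $\lambda_i$ is nonincreasing and $\mu_i$ is nondecreasing. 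Since $X^1_0=X^2_0=i+1$ gives $X^1_0\le_{\rm st}X^2_0$, Lemma~\ref{LBD} yields $X^1_t\le_{\rm st}X^2_t$ for all $t$, which is the displayed comparison, hence Condition~\ref{cond:1}.

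For the ``only if'' direction I would argue infinitesimally, exactly as in the orderliness step of the proof of Lemma~\ref{LBD}. Assuming the comparison $X_t(i+1)\le_{\rm st}X_t(i)+1$, I would evaluate the tail inequality at two thresholds: from $\pp_{i+1}(X_t\ge i+2)\le\pp_i(X_t\ge i+1)$, dividing by $t$ and letting $t\downarrow0$ gives $\lambda_{i+1}\le\lambda_i$, while the complementary inequality $\pp_{i+1}(X_t\le i)\ge\pp_i(X_t\le i-1)$ treated the same way gives $\mu_{i+1}\ge\mu_i$. Orderliness guarantees that the probability of two or more jumps in $[0,t]$ is $o(t)$, so these limits pick out exactly the single-jump rates.

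The main obstacle I anticipate is the bookkeeping at the boundary state $0$: Lemma~\ref{LBD} is stated for processes on all of $\mathbb{Z}_+$ with rates prescribed at every state, whereas $X^2=X(i)+1$ is a shifted process that never visits $0$. I would resolve this by extending $X^2$ to $\mathbb{Z}_+$ with an arbitrary birth rate at $0$ (say $\lambda^2_0:=\lambda_0$, so that $\lambda^1_0\le\lambda^2_0$ holds trivially); since $\mu^2_1=\mu_0=0$ forces $X^2$ to remain in $\{1,2,\dots\}$ once started there, this extension leaves the law of $X^2$ unchanged and makes the rate condition of Lemma~\ref{LBD} hold at every state. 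Everything else is the routine translation between Condition~\ref{cond:1}, the centered comparison, and the rate inequalities.
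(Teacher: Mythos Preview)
Your proposal is correct and follows essentially the same route as the paper: reduce Condition~\ref{cond:1} to the comparison $X_t(i+1)\le_{\rm st}X_t(i)+1$, recognize $X_t(i)+1$ as a shifted birth--death process (extended at state $0$ with an arbitrary birth rate $\ge\lambda_0$), and then read off the rate inequalities via Lemma~\ref{LBD}. The only cosmetic difference is that the paper invokes Lemma~\ref{LBD} as a two-sided equivalence in one stroke, whereas you split the two directions and, for ``only if'', reproduce the orderliness/infinitesimal argument from the proof of Lemma~\ref{LBD} directly; the content is the same.
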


\begin{proof}
Let us us define $X(i)$ a birth and death process with the above parameters which starts from $i$. We need to show that $X_t(i+1)-(i+1)\le_{\text{st}} X_t(i)-i$, or, equivalently, that
$X_t(i+1)\le_{\text{st}} X_t(i)+1$. Now $X_t(i)+1$ is a birth and death process on $\mathbb{Z}_+$ starting from $i+1$ with arrival rates $\tilde \lambda_i=\lambda_{i-1}$ for $i\ge 1$ and $\tilde \lambda_0$ may be chosen arbitrarily in $[\lambda_0,\infty)$. The departure rates of $X_t(i)+1$ are $\tilde\mu_i=\mu_{i-1}$ for $i\ge 2$ and $\tilde\mu_1=0$.

From Lemma~\ref{LBD} it follows that the necessary and sufficient condition that we seek is $\tilde \lambda_i\ge \lambda_i$ for $i\ge 0$ and $\tilde\mu_i\le \mu_i$ which is equivalent to the condition that $\lambda_i$ is nonincreasing and $\mu_i$ is nondecreasing (in $i$).
\end{proof}

\begin{remark}{\rm
Note that a pure birth or pure death process is a special case of birth and death processes and thus of Theorem~\ref{LBD} and Corollary~\ref{CBD} hold for these processes as well. Also note that the state space does not need to be $\mathbb{Z}_+$, but can also be any finite or infinite subinterval of $\mathbb{Z}$. }
\end{remark}

\section{Some concluding remarks}\label{CONCL}
In this paper have focused on techniques to prove structural properties of Markov processes. We have developed a framework that succeeds in bringing various branches of the literature under a common umbrella. 
As indicated earlier, we strongly believe that the results presented in this paper open up the opportunity to produce compact proofs of existing results, as well as to efficiently derive new properties. We conclude this paper with a couple of remarks.

Recalling Remark~\ref{rem:mixing}, when there exists a stationary distribution, for all of the examples discussed in Section~\ref{Motivation}  we have that $\text{Cov}(f_1(X_0),f_2(X_t))$ vanishes as $t\to\infty$ whenever $f_1,f_2$ are nondecreasing with $\ee f_i^2(X_0)<\infty$ for $i=1,2$. 
We note that in light of the findings of Sections~\ref{rlp} and~\ref{drlp},  \cite[Thms.~1 and~2]{B19} (as well as the earlier \cite[Thm.\ 3.1]{EM} and \cite[Thm.\ 2.2]{GM}) are special cases of our Theorems~\ref{cor:stat012hom} and~\ref{cor:stat0123hom}. In addition, \cite[Thm.~3.1]{KS94} (which holds for any reflected process with stationary, not necessarily independent, increments, hence not necessarily Markov) restricted to the L\'evy case (and the earlier  \cite[Thm.\ 3.3]{K92}) as well as the mean (not variance) parts of  \cite[Thms.~4.6 and~7.5]{AM} are special cases of (iv) of our Theorem~\ref{thm:sm}  (upon taking $X_0=0$ in \cite{AM}, that is).

One could wonder whether the monotonicity of the variance, as was discovered in \cite{AM} in a reflected L\'evy context, would carry over to any stochastically monotone Markov process satisfying Condition~\ref{cond:1}. However, as it turns out, this particular result from \cite{AM} essentially follows due to the specific properties of reflected L\'evy processes (or, in the discrete-time case, reflected random walks) and is not true in general. One elementary counterexample is the following. Let $\{N_t\,|\,t\ge 0\}$ be a Poisson process (starting at $0$) and take $X_t=(k+N_t)\wedge m$. Then $\{X_t\,|\,t\ge 0\}$ is a Markov process with state space $\{i\,|\,i\le m\}$, with an initial value $k$ and an absorbing barrier $m$. On $k\le m$ we have that $(k+N_t)\wedge m$ is nondecreasing in $k$ and $X_t-X_0=N_t\wedge(m-k)$  is nonincreasing in $k$ and thus this is a stochastically monotone Markov process satisfying Condition~\ref{cond:1}. Clearly, $\text{Var}(X_0)=0$ and since $X_t\to m$ (a.s.) as $t\to \infty$ then by bounded convergence ($k\le X_t\le m$) the variance vanishes as $t\to\infty$. Since the variance is strictly positive for all $0<t<\infty$, it cannot be monotone in $t$.

{
}
\end{document}